\documentclass[12 pt]{article}
\usepackage{amsmath,amsfonts,amsthm,amssymb,array,mathrsfs,latexsym,paralist,  color}
\usepackage[all]{xy}
\usepackage{tikz-cd, soul, pgfplots}
\usepackage{lipsum}

\usepackage{dsfont}

\usetikzlibrary{arrows,automata}

\tikzstyle{V}=[fill=black,circle,scale=0.2, outer sep = 4pt]

\newtheorem{thm}{Theorem}[section]
\newtheorem{prop}[thm]{Proposition}
\newtheorem{proposition}[thm]{Proposition}

\newtheorem{lemma}[thm]{Lemma}
\newtheorem{thmx}{Theorem}

\theoremstyle{remark}
\newtheorem{rmk}[thm]{Remark}

\theoremstyle{definition}
\newtheorem{defn}[thm]{Definition}

\numberwithin{equation}{section}

\newcommand\numberthis{\addtocounter{equation}{1}\tag{\theequation}}

\DeclareMathOperator{\Aut}{Aut}

\DeclareMathOperator{\Ind}{Ind}
\DeclareMathOperator{\Res}{Res}

\DeclareMathOperator{\gMod}{\mathbf{G-Mod}}

\DeclareMathOperator{\KK}{KK}
\DeclareMathOperator{\KO}{KO}
\DeclareMathOperator{\KU}{KU}
\DeclareMathOperator{\KT}{KT}
\DeclareMathOperator{\KX}{KX}
\DeclareMathOperator{\KY}{KY}
\DeclareMathOperator{\Kk}{K}
\DeclareMathOperator{\Hh}{H}
\DeclareMathOperator{\ABC}{ABC}

\newcommand{\bi}{\begin{itemize}}
\newcommand{\ei}{\end{itemize}}
\newcommand{\be}{\begin{enumerate}}
\newcommand{\ee}{\end{enumerate}}

\newcommand{\C}{\mathbb{C}}

\newcommand{\K}{\mathcal{K}}

\newcommand{\R}{\mathbb{R}}

\newcommand{\Z}{\mathbb{Z}}

\newcommand{\id}{\operatorname{id}}

\def\sp{^}

\def\pr{^{\scriptscriptstyle \R}}
\def\pc{^{\scriptscriptstyle \C}}
\def\po{^{\scriptscriptstyle \mathrm{O}}}
\def\pu{^{\scriptscriptstyle \mathrm{U}}}
\def\pt{^{\scriptscriptstyle \mathrm{T}}}
\def\px{^{\scriptscriptstyle \mathrm{X}}}
\def\py{^{\scriptscriptstyle \mathrm{Y}}}
\def\sr{_{\scriptscriptstyle \R}}
\def\sc{_{\scriptscriptstyle \C}}
\def\crr{^{\scriptscriptstyle {\mathrm{CR}}}}
\def\crt{^{\scriptscriptstyle {\mathrm{CRT}}}}

\def\CRT{\mathrm{CRT}}

\newcommand{\cs}{\mathrm{C}^\ast}

\newcommand{\Input}{\mathop{\underline{\quad}}}
\newcommand{\suchthat}{\;\ifnum\currentgrouptype=16 \middle\fi|\;}

\begin{document}

\title{Functoriality of real crossed product K-theory {spectral sequences} with respect to group homomorphisms}
\author{Jeffrey L. Boersema, Sarah L. Browne, Elizabeth Gillaspy,\\ and Alistair Miller}

\maketitle
\begin{abstract}
    Spectral sequences are a key tool for computing the $\Kk$-theory of a crossed product $\cs$-algebra. 
    However, the impact of a group homomorphism $\Omega\colon G \to H$ on such a spectral sequence was unknown until quite recently, even when $G = \mathbb Z^\ell$, $H = \mathbb Z^{k}.$ Recent work [Mil25] of the fourth-named author in the complex case establishes that ABC spectral sequences are functorial with respect to group homomorphisms. In this paper, we obtain the analogous result for real $\Kk$-theory and for united $\Kk$-theory. Specifically,  we first show that the ABC spectral sequence approximates $\KO_*(G \ltimes_r A)$ with the group homology $\Hh_p(G;\KO_q(A))$ when $G$ is a torsion-free discrete group satisfying the Baum--Connes conjecture with coefficients in $A$. Then, for a homomorphism $\Omega \colon G \to H$ of such groups with amenable kernel, and a real $H$-$\cs$-algebra $A$,
   we show  moreover that the map in K-theory induced by the $*$-homomorphism $G \ltimes_r A \to H \ltimes_r A$ is approximated by the natural map in group homology.
\end{abstract}

\section{Introduction}
$\Kk$-theory is a key $\cs$-algebraic invariant, yet its computation is nontrivial, in both the real and complex settings.  
In this paper, we are concerned with the map $\KO_*(G \ltimes_r A) \to \KO_*(H \ltimes_r A)$ on the $\Kk$-theory of {real} crossed product $\cs$-algebras which is induced by a homomorphism $\Omega \colon G \to H$ of discrete groups (with amenable kernel).  

Even in the most straightforward case when $G = \Z^\ell
$, our best understanding of the $\Kk$-theory of the (real or complex) crossed-product $\cs$-algebra $G \ltimes A$ passes through a spectral sequence. (See Appendix \ref{sec:appendix} for a discussion of this.) Until quite recently, the question of the functoriality of the spectral sequence with respect to group homomorphisms had not been addressed in the literature. This defect was recently remedied (in a very general setting) by the fourth-named author \cite[Theorems F and 6.1]{miller-hlogy}. 
Roughly speaking, \cite[Theorem 6.1]{miller-hlogy} tells us that if one has the input data needed to construct two {ABC} spectral sequences, and a map (the ABC morphism) between these data, then the ABC morphism yields a morphism of the spectral sequences and a compatible morphism of the target groups.

Although \cite[Theorem 6.1]{miller-hlogy} holds in great generality, its applicability in concrete situations depends on a thorough understanding of {what} the ABC morphism {does} on the $E^2$ pages.  For many situations involving complex $\cs$-algebras and group homomorphisms, \cite{miller-hlogy} provides these details. 
In particular, using \cite[Example 3.8]{miller-categories}, from a homomorphism $\Omega\colon G \to H$ of torsion-free countable discrete groups satisfying the Baum--Connes conjecture with coefficients in a complex $H$-$\cs$-algebra $A$, we obtain a morphism between the spectral sequences converging to $\Kk_*(G \ltimes_r A)$ and $\Kk_*(H \ltimes_r A)$. Indeed, by \cite[Example 5.11]{miller-categories}, when $\ker \Omega$ is amenable this morphism converges to the map in K-theory induced by the $*$-homomorphism $G \ltimes_r A \to H \ltimes_r A$. Moreover, by \cite[Remark 4.6 and Theorem 5.14]{miller-hlogy}, the  $E^2$ pages of these spectral sequences are given by group homology with coefficients in $\Kk_*(A)$, and the morphism on the $E^2$ page is precisely the map $\Omega_*\colon \Hh_p(G; \Kk_q(A)) \to \Hh_p(H; \Kk_q(A))$ induced by $\Omega$ on group homology.

Our main goal in this paper is to establish the analogous result for crossed products of real $\cs$-algebras.  This main result is phrased in terms of united $\Kk$-theory as in \cite{boersema2002, boersema2004, brs}, of which we provide a quick review in Section~\ref{subsection-unitedK}. 
 The united $\Kk$-theory $\Kk\crt(A)$ of a real $\cs$-algebra $A$ is a system of abelian groups and natural transformations among them, known as a $\CRT$-module, incorporating real, complex, and self-conjugate $\Kk$-theory.
In our setting, 
we approximate $\Kk\crt(G \ltimes_r A)$ by
a system of three spectral sequences and natural transformations between them, in which the differential maps commute with the natural transformations.
We call such a system a $\CRT$-spectral sequence; see Section \ref{subsection-unitedK} for details.

 In our main result, Theorem~\ref{intro main theorem}, we articulate the naturality of 
 this $\CRT$-spectral sequence with respect to group homomorphisms.

\begin{thmx}[See Theorem \ref{CRT-sp.seq.summary}]\label{intro main theorem}
Let $H$ be a discrete, countable, torsion-free group which satisfies the Baum--Connes conjecture with coefficients in a separable real $H$-$\cs$-algebra $A$. Then there is a $\CRT$-spectral sequence $E^r_{p,q} = (E^r_{p,q})\crt(H,A)$ 
which converges to $\Kk_*\crt(H \ltimes_r A)$. Moreover, this is functorial; given a homomorphism $\Omega \colon G \to H$ of discrete, countable, torsion-free groups, whose kernel is amenable,
there is a morphism of $\CRT$-spectral sequences
\[ (E^r_{p,q})\crt(G,A) \to (E^r_{p,q})\crt(H,A) \]
which is given by 
\[\Hh_p(\Omega,\Kk_q\crt(A)) \colon \Hh_p(G;\Kk_q\crt(A)) \to \Hh_p(H;\Kk_q\crt(A)) \]
on the $E^2$ page and converges to \[ \Kk_q\crt(\Omega \ltimes_r \id_A) \colon \Kk_q\crt(G \ltimes_r A) \to \Kk_q\crt(H \ltimes_r A)\] on the $E^\infty$ page.
\end{thmx}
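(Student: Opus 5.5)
The plan is to reduce everything to Miller's ABC machinery in the three parallel settings (real, complex, self-conjugate), and then check that the three resulting spectral sequences are linked by the $\CRT$ natural transformations.

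\textbf{Step 1 (the real case).} Work in the real equivariant Kasparov category $\KK^H_{\R}$. First verify that the triangulated-category setup of \cite{miller-categories} carries over verbatim:
\begin{itemize}
\item the complementary pair of subcategories generated by the compactly induced algebras $\Ind_1^H B$ and the weakly contractible objects;
\item the projective class given by restriction to the trivial subgroup;
\item the cellular approximation $P \to A$, which realises the Baum--Connes assembly map.
\end{itemize}
For torsion-free $H$ satisfying Baum--Connes with coefficients in $A$, the phantom tower (ABC spectral sequence) of the homological functor $\KO_*(H\ltimes_r -)$ then converges to $\KO_*(H\ltimes_r A)$.

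To identify the $E^2$ page, use the projective resolution of $A$ coming from a resolution of $\Z$ by free $\Z[H]$-modules. This gives $E^1_{p,q}=\KO_q(A)\otimes_{\Z[H]} C_p$, since $\KO_*(H\ltimes_r \Ind_1^H B)\cong \KO_*(B)$ with $H$ acting trivially after passing to orbits. Hence
\[
E^2_{p,q}\cong \Hh_p(H;\KO_q(A)),
\]
as in \cite[Theorem 5.14]{miller-hlogy}. The complex and self-conjugate cases are identical after applying complexification, respectively the functor $A\mapsto A\otimes T$ (with $T$ the self-conjugate algebra). Both are triangulated functors commuting with induction and crossed products.

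\textbf{Step 2 (the $\CRT$ structure).} The natural transformations $c, r, \varepsilon, \zeta, \psi, \gamma, \tau$ are all given by Kasparov products with fixed $\KK$-classes between $\R$, $\C$, $T$ and their suspensions. They are therefore natural transformations between homological functors on $\KK^H_{\R}$, and by naturality of the ABC construction in the homological functor they induce morphisms of spectral sequences. The $\CRT$ relations hold on each page because they hold on the level of functors. This yields the $\CRT$-spectral sequence converging to $\Kk\crt_*(H\ltimes_r A)$, with $E^2 = \Hh_p(H;\Kk\crt_q(A))$.

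\textbf{Step 3 (functoriality in $\Omega$).} Restriction along $\Omega$, $\KK^H\to\KK^G$, preserves the projective class, since $\Res^H_1 \circ \Res_\Omega = \Res^G_1$. It sends $H$-projective resolutions to $G$-projective ones after comparison; this gives the ABC morphism of \cite[Example 3.8]{miller-categories}. Together with the natural transformation $G\ltimes_r \Res_\Omega(-) \to H\ltimes_r(-)$ induced by $\Omega\ltimes_r\id$, \cite[Theorem 6.1]{miller-hlogy} gives a morphism of spectral sequences converging to $\Kk\crt(\Omega\ltimes_r\id_A)$.

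Convergence to the right map requires $\ker\Omega$ to be amenable, as in \cite[Example 5.11]{miller-categories}. Amenability makes $G$ satisfy Baum--Connes with coefficients in $\Res_\Omega A$, using the permanence property for extensions by amenable groups, which holds equally for real algebras since real BC is equivalent to complex BC.

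On $E^2$, the morphism is computed by a chain map between the free resolutions over $\Z[G]$ and $\Z[H]$ covering the identity on $\Z$. This is exactly the map inducing $\Omega_*$ on group homology, so it gives $\Hh_p(\Omega;\Kk\crt_q(A))$. Compatibility with the $\CRT$ operations again follows from naturality, since Kasparov products with the fixed classes commute with restriction and with $\Omega\ltimes_r\id$.

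\textbf{Main obstacle.} I expect the hardest step to be transporting Miller's results to the real and self-conjugate settings. Specifically, one must check that the real Baum--Connes assembly map coincides with the cellular approximation in $\KK^H_{\R}$, and that the relevant permanence properties hold in $\KK^H_{\R}$. My first approach would be to use the known equivalence of real and complex Baum--Connes (via $\KO$ versus $\Kk$ detection), together with the fact that a real $\KK$-class is invertible iff its complexification is. This reduces those statements to the complex ones already in \cite{miller-categories, miller-hlogy}.
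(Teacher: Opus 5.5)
Your overall architecture matches the paper's: an ABC tuple on $\KK^H$ with the ideal $\ker\Res_H$, $E^2$ identified through an explicit induced resolution, an ABC morphism built from $\Res^G_H$ and a crossed-product natural transformation, and the $\CRT$ structure obtained by running everything on $A\otimes X$. The gap is in how you use amenability of $\ker\Omega$. You claim that amenability, together with Baum--Connes for $H$ with coefficients in $A$, gives Baum--Connes for $G$ with coefficients in $\Res^G_H A$. That does not follow from the standard permanence results, for two reasons:
\begin{itemize}
\item for an extension $1\to\ker\Omega\to G\to\Omega(G)\to 1$, they require $\Omega(G)$ to satisfy Baum--Connes with coefficients in the (twisted) crossed product $\ker\Omega\ltimes_r A$;
\item passing from $H$ to its subgroup $\Omega(G)$ requires coefficients in an induced algebra such as $C_0(H/\Omega(G))\otimes A$.
\end{itemize}
Baum--Connes for the single algebra $A$ gives neither. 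The statement is meant as in Theorem~\ref{CRT-sp.seq.summary}(3), with both $G$ and $H$ satisfying Baum--Connes with coefficients in $A$. This is what makes $(E^r_{p,q})\crt(G,A)$ converge to $\Kk\crt_*(G\ltimes_r A)$, so you should assume it rather than derive it.

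The place where amenability is actually needed is hidden in your Step~3. Your transformation $\KO_*(G\ltimes_r\Res^G_H(-))\Rightarrow\KO_*(H\ltimes_r -)$ presupposes that $G\ltimes B\to H\ltimes B$ descends to $\Omega\ltimes_r\id_B$ for every $H$-algebra $B$. This fails in general: for $F_2\to\{e\}$, the trivial representation of $\cs(F_2)$ would have to factor through $\cs_r(F_2)$. Under amenable kernel, its existence is Proposition~\ref{prop:amenable-kernel}, proved via induction in stages and weak containment. You would also need this transformation to be natural for all $\KK^H$-morphisms, i.e.\ compatible with reduced descent, which you do not address.

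The paper avoids both points by putting the full crossed product in the ABC tuple:
\begin{itemize}
\item $\omega\colon G\ltimes\Res^G_H B\to H\ltimes B$ exists and is natural for any $\Omega$, so the spectral-sequence morphism exists even for non-amenable kernel;
\item full and reduced localizations agree, because $LA$ lies in the localizing subcategory generated by induced algebras;
\item amenability enters only at the end, through Proposition~\ref{prop:amenable-kernel} for $A$ and the commuting diagram from topological to full to reduced $\Kk$-theory, to identify $\mathbb L(\Omega)$ with $\KO_*(\Omega\ltimes_r\id_A)$.
\end{itemize}

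Two smaller points. First, $\Res^G_H$ preserves the ideal and exactness but not projectives when $\Omega$ is not injective, since points of $H$ have stabilizer $\ker\Omega$. This is harmless: the comparison map only needs $P_\bullet$ projective and $\Res^G_H Q_\bullet$ exact. Second, your assertion that the comparison map ``is exactly the map inducing $\Omega_*$'' is the content of Theorem~\ref{thm:group-naturality}. Proving it requires the square relating $\nu$, $\mu$ and $\omega_*$ to commute, and $\KO_i$ of the resolutions to be acyclic.
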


These hypotheses on the groups $G, H$ are needed for a number of reasons. Countability of the groups, like separability of the $\cs$-algebras, is needed to access the technical power of Kasparov's $\KK$-theory. 
Working with torsion-free discrete groups guarantees that the $E^2$ page of the spectral sequence is given by the homology groups $\Hh_p(G;\Kk_q\crt(A))$ and $\Hh_p(H;\Kk_q\crt(A))$ 
(see Remark \ref{rmk:torsion-free} below).   Assuming that $G, H$ satisfy the Baum--Connes conjecture with coefficients guarantees that the  ABC spectral sequences converge to the operator K-theory groups $\Kk_q(G \ltimes_r A)$ and $\Kk_q(H \ltimes_r A)$ (see Remark \ref{rmk:full-vs-reduced}). Finally, as we show in Proposition \ref{prop:amenable-kernel}, the hypothesis that $\ker \Omega$ is amenable guarantees the existence of the $*$-homomorphism $\Omega \ltimes_r \id_A$,  and hence gives us an identifiable expression for the impact of $\Omega$ on the $E^\infty$ pages.

Our interest in 
the functoriality of real $\cs$-algebraic $\Kk$-theory with respect to group homomorphisms  
arose from a desire to relate the $\Kk$-theory of a (real) higher-rank graph $\cs$-algebra $C^*\sr(\Lambda)$ with the $\Kk$-theory of the underlying directed graphs.  (If a higher-rank graph $\Lambda$ has rank $k$, we say $\Lambda$ is a $k$-graph; a directed graph is then a 1-graph.) For a $k$-graph $\Lambda$,  \cite[Theorem 3.1]{boersema-gillaspy} says that $C^*\sr(\Lambda) \sim_{ME} \Z^k \ltimes C^*\sr(\Lambda \times_d \Z^k)$. Hence, understanding how group homomorphisms affect the (spectral sequence converging to the) $\Kk$-theory of a crossed product by $\Z^k$ will enable us to analyze the relationship between the $\Kk$-theory of real directed graph $\cs$-algebras, and the $\Kk$-theory of real $k$-graph $\cs$-algebras.  We will detail this relationship in future work \cite{BBG-embeddings}.

We also hope that by concisely articulating the implications of \cite{miller-hlogy} for the $\Kk$-theory of (real and complex) crossed-product $\cs$-algebras by discrete groups, as we do in Theorem \ref{KO-sp.seq.summary} and Remark \ref{rmk:main-for-complex}, we will facilitate future applications of \cite[Theorem 6.1]{miller-hlogy} in that context. The relationship between homology and $\cs$-algebraic $\Kk$-theory for groupoids and crossed products has drawn an increasing amount of attention in recent years  (for a very incomplete list, cf.~\cite{matui-HK, scarparo, bdgw,deeley, proietti-yamashita-iv, li-2025}).  Since \cite[Theorem 6.1]{miller-hlogy} establishes the functoriality of this relationship with respect to group homomorphisms,\footnote{and a more general class of groupoid morphisms} 
we anticipate that this work will facilitate  $\Kk$-theory computations in previously inaccessible settings.

This paper is structured as follows.  The 
Preliminaries section includes Proposition \ref{prop:ABC-spectral}, which establishes the existence of an ABC morphism in  our case of interest, and some classical but not universally known results about the computation of group homology from acyclic resolutions. Section \ref{sec:hlogy} relates the derived functors of the ABC spectral sequences to group homology, taking inspiration from \cite[Remark 4.6 and Theorem 5.14]{miller-hlogy}, and culminating in our main technical theorem Theorem \ref{group homology theorem}. This theorem establishes that the $E^2$-page map induced by a group homomorphism $\Omega\colon G \to H$, which is defined as a map between derived functors,  is indeed given by the natural homomorphism of homology groups.
This leads (in our main results, Theorems \ref{KO-sp.seq.summary} and \ref{CRT-sp.seq.summary}) to our desired, self-contained statement of the implications of \cite[Theorem 6.1]{miller-hlogy} for discrete crossed products of real $\cs$-algebras.

\section*{Acknowledgments}
This research was initiated during a visit of the third-named author to the University of Southern Denmark (SDU), which was facilitated by a Faculty International Activity Award from the University of Montana. E.G.~thanks the Operator Algebras group at SDU for the invitation and their hospitality.

A.M. was supported by the Independent Research Fund Denmark through Grant 1054-00094B and by the Research Foundation Flanders (FWO) through Project 1212126N.

The authors thank Jamie Gabe for helpful conversations, and in particular for suggesting the current collaboration.

\section{Preliminaries}

Our  goal is Theorem \ref{CRT-sp.seq.summary}, which establishes the naturality in $G$ of a spectral sequence which converges to the united $\Kk$-theory (see Section \ref{subsection-unitedK}) of the crossed product $G \ltimes_r A$ of a real $\cs$-algebra $A$.  This spectral sequence arises from an ABC tuple \cite[Definition 2.11]{miller-hlogy}, which is a construct from triangulated categories. We summarize the relevant features of these objects here, but  in the interests of brevity, we refer the reader to the literature \cite{meyer-nest,  meyer-nest-2010, meyer-2008, bonicke-proietti, miller-hlogy} for detailed definitions.

At several points in this paper, we restrict our attention to countable discrete groups $G$ which satisfy the {\em Baum--Connes conjecture} with coefficients in a separable real $G$-$\cs$-algebra $A$.   This conjecture (in the real case)
 {asserts that the {\em real assembly map} 
 \[ \mu_i(G,A) \colon \KO_i^{G}(\underline{E} G;A) \to \KO_i(G \ltimes_{r} A)\]
is an isomorphism 
\cite{schick, baum-karoubi, rosenberg2016}.
It is known that for a given $G$ and $A$, the real assembly map $\mu_i(G,A)$ is an isomorphism for all $i$ if and only if the complex assembly map 
$\mu\pc_i(G,A\sc)$ for the complexification $A_{\mathbb C}$ is an isomorphism for all $i$ (see Theorem~2.10 in \cite{schick} and the main theorem of \cite{baum-karoubi}).
When this happens the $\cs$-algebraic $\Kk$-theory $\KO_*(G \ltimes_r A)$ and $\Kk_*(G \ltimes_r A\sc)$ become accessible via topological tools (e.g.~classifying spaces and localizations). 
For any amenable group $G$ and any $G$-$\cs$-algebra $A$, for example, both the real and complex assembly maps are isomorphisms.
 For further information in the complex setting, see the Baum--Connes extended survey \cite{BC-survey}.

In this paper, we exclusively invoke the Baum--Connes conjecture in the proof of Proposition \ref{prop:ABC-spectral}, where we use the formulation of the assembly map from \cite[Theorem 5.2]{meyer-nest} in terms of localizations. }

\subsection{Crossed products and homomorphisms}

Let $A$ be a real or complex $\cs$-algebra equipped with an action of a discrete group $H$. The (full) \emph{crossed product} $H \ltimes A$ is the universal (real or complex) $\cs$-algebra generated by $A$ and the full group $\cs$-algebra $\cs(H) = \cs( u_h \mid h \in H )$ such that $u_h a u_h^* = h \cdot a$ for each $h \in H$ and $a \in A$. A group homomorphism $\Omega \colon G \to H$ equips $A$ with an action of $G$ and induces a $*$-homomorphism \[\Omega \ltimes \id_A \colon G \ltimes A \to H \ltimes A\] satisfying $(\Omega \ltimes \id_A)(a) = a$ and $(\Omega \ltimes \id_A)(u_g) = u_{\Omega(g)}$.

Given a (real or complex) representation $\rho \colon A \to \mathcal B(\mathcal H_\rho)$ of $A$, the \emph{regular representation} $\lambda^H_\rho$ of $H \ltimes A$ on the Hilbert space $\ell^2(H,\mathcal H_\rho)$ is given, for $a \in A$ and $h \in H$, by 
\begin{align*}
(\lambda^H_\rho(a) \xi)(s) & = \rho(s^{-1} \cdot a) \xi(s) & (\lambda^H_\rho (u_h)\xi)(s) & = \xi(h^{-1}s)
\end{align*}
for $s \in H$ and $\xi \in \ell^2(H,\mathcal H_\rho)$. The \emph{reduced crossed product} $H \ltimes_r A$ is the completion of $H \ltimes A$ with respect to the seminorm 
\[ \| \xi \|_r = \sup \{ \|\lambda^H_\rho(\xi)\| \mid \rho \colon A \to \mathcal B(\mathcal H_\rho) \text{ a representation of } A  \}, \, \xi \in H \ltimes A. \]
We note that if $\rho$ is a faithful representation of $A$, then $\| \xi \|_r = \|\lambda^H_\rho(\xi)\|$ for each $\xi \in H \ltimes A$. 
See \cite[Section~1.3]{schroderbook} and \cite[Section 2]{boersema-rmj} for more on crossed products for real $\cs$-algebras.

Suppose that $A$ is a real $H$-$\cs$-algebra with complexification $A_{\mathbb C}$. Then $H \ltimes A_{\mathbb C}$ may be identified with the complexification of $H \ltimes A$ via the natural inclusion $H \ltimes A \to H \ltimes A_{\mathbb C}$. Moreover, the reduced crossed product $H \ltimes_r A_{\mathbb C}$ is the complexification of $H \ltimes_r A$. This implies that the reduced seminorm on $H \ltimes A$ is inherited from the reduced seminorm on $H \ltimes A_{\mathbb C}$.

In order to obtain our main results (Theorem \ref{KO-sp.seq.summary} and Theorem \ref{CRT-sp.seq.summary}) we will need the following result about crossed products. Even for complex $\cs$-algebras, while this result is undoubtedly known to experts, we could not locate a proof in the literature. Hence we include a proof here in both the real and complex cases. {The argument is heavily based on the book chapter \cite[\S 2]{CELY}.}

\begin{prop}
Let $\Omega \colon G \to H$ be a homomorphism of discrete groups with amenable kernel, and let $A$ be a (real or complex) $H$-$\cs$-algebra. Then the induced $*$-homomorphism $\Omega \ltimes \id_A \colon G \ltimes A \to H \ltimes A$ descends to a $*$-homomorphism $G \ltimes_r A \to H \ltimes_r A$ (which we denote $\Omega \ltimes_r \id_A$).
\label{prop:amenable-kernel}
\end{prop}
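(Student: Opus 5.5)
We must show that for every $\xi \in G \ltimes A$ we have $\|(\Omega\ltimes\id_A)(\xi)\|_r \le \|\xi\|_r$, where the right-hand norm is computed in $G \ltimes_r A$. It suffices to check this on the dense $*$-subalgebra $C_c(G,A)$ of finitely supported sums $\sum_g a_g u_g$. We would factor $\Omega$ as the quotient map $q\colon G \to G/N$, with $N=\ker\Omega$ amenable, followed by the injective homomorphism $\iota\colon G/N \hookrightarrow H$. We then treat the two cases separately: for injective maps we compare regular representations directly, and for quotients by amenable normal subgroups we use amenability of $N$.

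\textbf{Injective step.} Let $K = G/N \le H$, fix a faithful representation $\rho$ of $A$, and consider $\lambda^H_\rho$ on $\ell^2(H,\mathcal H_\rho)$. Decompose $\ell^2(H,\mathcal H_\rho) = \bigoplus_{Ht \in K\backslash H}\ell^2(Kt,\mathcal H_\rho)$ using right cosets $Kt$ (the left translation $u_k$ preserves $Kt$). Each summand is invariant under $\lambda^H_\rho(K\ltimes A)$. Via $s\mapsto st^{-1}$, the summand on $Kt$ is unitarily equivalent to $\lambda^K_{\rho\circ\alpha_{t^{-1}}}$, where $\alpha_{t^{-1}}$ denotes the action of $t^{-1}$ on $A$. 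Consequently the norm of $\lambda^H_\rho(\xi)$ for $\xi\in C_c(K,A)$ is at most $\sup_t\|\lambda^K_{\rho_t}(\xi)\|\le\|\xi\|_{K\ltimes_r A}$. (In fact equality holds, because the coset $K$ itself appears as a summand, but only the inequality is needed.) This yields $K\ltimes_r A\to H\ltimes_r A$.

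\textbf{Quotient step (main obstacle).} We need $\|\lambda^{G/N}_\rho(q_*\xi)\|\le\|\lambda^G_\rho(\xi)\|$. Our plan follows \cite[\S2]{CELY}. Since $N$ is amenable, take a Følner net, or equivalently unit vectors $\eta_i\in\ell^2(N)$ that are almost invariant under translation by elements of $N$. Choose a set-theoretic section $\sigma\colon G/N\to G$, and identify $\ell^2(G,\mathcal H_\rho)\cong\ell^2(G/N,\ell^2(N,\mathcal H_\rho))$ via $g=\sigma(x)n$. For $\zeta\in\ell^2(G/N,\mathcal H_\rho)$, define $V_i\zeta(\sigma(x)n)=\eta_i(n)\zeta(x)$; each $V_i$ is an isometry. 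The key computation is to show that for each $a\in A$ and $g\in G$,
\[
\|V_i^*\lambda^G_\rho(a u_g)V_i-\lambda^{G/N}_\rho(a u_{q(g)})\|\to 0 .
\]
For $a$ this is exact, since $N$ acts trivially on $A$ (because the action factors through $H$), so $s^{-1}\cdot a$ depends only on $q(s)$. For $u_g$, translation by $g$ sends $\sigma(x)n$ to $\sigma(q(g)x)\,c(g,x)\,n'$ with a cocycle $c(g,x) \in N$. The error term is then governed by $\sup_x |\langle \eta_i, c(g,x)\cdot\eta_i\rangle - 1|$.

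This is the main obstacle: the cocycle takes possibly infinitely many values in $N$ as $x$ varies, so uniform almost-invariance is not automatic. To get around it, we would replace the single vector $\eta_i$ with a section-dependent choice $\eta_i^x$, obtained by translating $\eta_i$ appropriately; equivalently, we choose a Følner set adapted to each coset. Alternatively, we would argue by positive-definite functions in the manner of CELY: the function $\mathds 1_N$ is a pointwise limit of normalized positive-definite functions $\langle\lambda(n)\eta_i,\eta_i\rangle$ on $N$, which extend to $G$ by zero. Each such function induces a completely positive contractive multiplier on $G\ltimes_r A$. The limit of these maps, composed with the reduced structure, identifies $\lambda^{G/N}$ as weakly contained in $\lambda^G$. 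Weak containment then gives the norm inequality on $C_c(G,A)$.

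Finally, composing the two steps gives $\Omega\ltimes_r\id_A$. In the real case every construction above uses real Hilbert spaces and real-valued vectors $\eta_i$, so the argument goes through verbatim. Alternatively, the real case follows by restricting the complex result, since the reduced seminorm on $H\ltimes A$ is inherited from $H\ltimes A_{\mathbb C}$.
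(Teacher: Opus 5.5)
Your reduction to an injective step and a quotient step is fine, and your coset decomposition does prove $\|\xi\|_{H\ltimes_r A}\le\|\xi\|_{K\ltimes_r A}$ for $K\le H$, which the paper only asserts. The gap is in the quotient step, which is the entire content of the proposition: you never actually establish $\|\lambda^{G/N}_\rho(q_*\xi)\|\le\|\lambda^G_\rho(\xi)\|$.

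The compressions $V_i^*\lambda^G_\rho(\cdot)V_i$ fail for the reason you identify, and your first repair ("translating $\eta_i$") does not fix it. Take $G=\Z^2\rtimes\mathrm{SL}_2(\Z)$ and $N=\Z^2$. For $g=m\in N$, the cocycle values $\sigma(x)^{-1}m^{-1}\sigma(x)$ form an infinite $\mathrm{SL}_2(\Z)$-orbit, whatever section you choose. Since $N$ is abelian, every translate of $\eta_i$ has exactly the same coefficients $\langle\lambda(c)\eta_i,\eta_i\rangle$, and these tend to $0$ as $c\to\infty$. So the error is at least $1$ for every $i$. A family $\eta_i^x$ "adapted to each coset" would have to change scale with $x$ and also vary slowly in $x$: with $\sigma$ the inclusion of $\mathrm{SL}_2(\Z)$, the cocycle is trivial for $g\in\mathrm{SL}_2(\Z)$, and the error then compares $\eta_i^x$ with $\eta_i^{q(g)^{-1}x}$. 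You give no construction of such a uniformly slowly varying family, and for a general amenable $N$ this is a genuinely nontrivial problem.

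Your positive-definite-function alternative has the right ingredient: $\mathds 1_N$ is a pointwise limit of coefficients of $\lambda_N$, i.e.\ $1_N\prec\lambda_N$. But the mechanism you describe does not use it. The pointwise limit of the multipliers $m_{\phi_i}$ on $G\ltimes_r A$ is just the canonical conditional expectation $E_N\colon G\ltimes_r A\to N\ltimes_r A$. That map exists for every subgroup, amenable or not, so it cannot by itself yield the weak containment. What is missing is a step that carries $1_N\prec\lambda_N$ over to crossed products with coefficients and then up to $G$.

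The paper does this with induction of representations:
\begin{itemize}
\item $\lambda^G_\rho\simeq\Ind^G_{\{e\}}\rho\simeq\Ind^G_N\lambda^N_\rho$ by induction in stages;
\item amenability of $N$ gives $\mathrm{tr}\times\rho\prec\lambda^N_\rho$;
\item continuity of induction then gives $\Ind^G_N(\mathrm{tr}\times\rho)\prec\lambda^G_\rho$;
\item finally, $\Ind^G_N(\mathrm{tr}\times\rho)$ is identified with $\lambda^H_\rho\circ(\Omega\ltimes\id_A)$ on $\ell^2(G/N,\mathcal H_\rho)$.
\end{itemize}

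If you want to stay close to your own route, here is a correct version. Since $N$ acts trivially, $N\ltimes_r A\cong C^*_r(N)\otimes_{\min}A$, and amenability makes $\varepsilon\otimes\id_A$ bounded on it, where $\varepsilon$ is the trivial character. Then $(\varepsilon\otimes\id_A)\circ E_N\colon\sum_g a_gu_g\mapsto\sum_{n\in N}a_n$ is a completely positive contraction on $G\ltimes_r A$. Its KSGNS Hilbert $A$-module is $\ell^2(G/N,A)$, carrying the regular representation composed with $q_*$, and this yields the required norm inequality.
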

\begin{proof}
We first prove this statement for a complex $\cs$-algebra $A$.
As $\Omega(G) \ltimes_r A$ {embeds}
in $H \ltimes_r A$ we may further assume that $\Omega \colon G \to H$ is surjective. 
Let $N = \ker \Omega$ and let $\rho \colon A \to \mathcal B(\mathcal H_\rho)$ be a faithful representation of $A$, so that 
$\lambda^H_\rho(H \ltimes A) = H\ltimes_r A$ and $\lambda^G_\rho(G \ltimes A) = G \ltimes_r A$.  
Our goal is to show that the regular representation $\lambda^G_\rho$ of $G \ltimes A$ weakly contains the representation $(\Omega \ltimes \id_A)^*(\lambda^H_\rho)$ on $\ell^2(H,\mathcal H_\rho)$.

We shall utilise the subgroup induction functor $\Ind^G_N \colon \mathrm{Rep}(N \ltimes A) \to \mathrm{Rep}(G \ltimes A)$ \cite[Definition 2.7.1]{CELY} and additionally $\Ind^G_{\{e\}}$ and $\Ind^N_{\{e\}}$. 
By \cite[Remark 2.7.5(3)]{CELY}, the regular representation $\lambda^G_\rho$ of $G \ltimes A$ on $\ell^2(G,\mathcal H_\rho)$ is equivalent to $\Ind^G_{\{e\}} \rho$. Thus, by induction-in-stages \cite[Theorem 2.7.10]{CELY}, $\lambda^G_\rho \sim \Ind^G_{\{e\}} \rho$ is further equivalent to $\Ind^G_N \Ind^N_{\{e\}} \rho \sim \Ind^G_N \lambda^N_\rho$. By amenability of $N$, $\lambda^N_\rho$ weakly contains the trivial representation $\mathrm{tr} \times \rho$ of $N \ltimes A$ on $\mathcal H_\rho$ \cite[Proposition 2.4.5]{CELY}. Continuity of induction \cite[Proposition 2.7.4]{CELY} implies a weak containment $\Ind^G_N (\mathrm{tr} \times \rho) \prec \Ind^G_N \lambda^N_\rho \sim \lambda^G_\rho$. 

By \cite[Corollary 2.7.8]{CELY}, the representation $\Ind^G_N(\mathrm{tr} \times \rho)$ of $G \ltimes A$ is equivalent to the representation $\pi$ of $G \ltimes A$ on the Hilbert space $\ell^2(G/N,\mathcal H_\rho)$ given by $(\pi(u_t)\xi)(gN) = \xi(t^{-1}gN)$ and $(\pi(a)\xi)(gN) = \rho(g^{-1} \cdot a)\xi(gN)$ for $a \in A$, $g,t \in G$ and $\xi \in \ell^2(G/N,\mathcal H_\rho)$. Identifying $G/N$ with $H$, this is simply the regular representation $\lambda^H_\rho$ of $H \ltimes A$ composed with the $*$-homomorphism $\Omega \ltimes \id_A \colon G \ltimes A \to H \ltimes A$. Putting everything together, this representation $(\Omega \ltimes \id_A)^* (\lambda^H_\rho)$ is weakly contained in the regular representation $\lambda^G_\rho$, and so we obtain a $*$-homomorphism $G \ltimes_r A \to H \ltimes_r A$.

{Now, suppose that $A$ is a real $H$-$\cs$-algebra with complexification $A_{\mathbb C}$ and let $\eta \in G \ltimes A$. Using that the reduced norm $\|\cdot\|_r$ on   $G \ltimes_r A$ is inherited from the reduced norm $\| \cdot \|_{r,\mathbb C}$ on $G \ltimes_r A_{\mathbb C}$ and similarly for $H$, we compute 
\[
\|\Omega \ltimes \id_A (\eta) \|_r  = \|  \Omega \ltimes \id_A(\eta) \|_{r,\mathbb C} = \|\Omega \ltimes \id_{A_{\mathbb C}}(\eta) \|_{r,\mathbb C} \leq \| \eta \|_{r,\mathbb C} = \| \eta \|_r.
\]
That is, $\Omega \ltimes \id_A\colon G \ltimes_r A \to H \ltimes_r A$ is indeed a $*$-homomorphism, as claimed.}
\end{proof}

\subsection{KK-theory and triangulated categories}

We denote by $\KK$ the category of real separable $\cs$-algebras with morphisms the elements of \textcolor{black}{the real KK-groups} $\KK_*(A,B)$.  Similarly, for a (countable discrete) group $G$, $\KK^G$ denotes the category whose objects are separable real $\cs$-algebras with an action of $G$, and whose morphisms are classes in $\KK^G(A,B)$. These categories are examples of {\em triangulated categories} as was verified in \cite{meyer-nest},
in the same way that the analogous categories of complex $\cs$-algebras, or complex $\cs$-algebras with $G$-action, are.

Consider the (triangulated) functors 
$$\Res_G\colon \KK^G \to \KK \quad \text{and} \quad \Ind_G\colon \KK \to \KK^G ,$$ 
where $\Res_G$ forgets the $G$-action, and $\Ind_G A = \bigoplus_{g\in G} A = C_0(G, A)$, where the $G$-action is given by permuting coordinates: if $f \in C_0(G, A),$ $g \cdot f(h) := f(g^{-1}h)$. Then $\mathfrak I^G := \ker \Res_G$ is a homological ideal in the sense of \cite[Definition 2.20]{meyer-nest}. 
(In this paper, our interaction with the analogous homological ideal in complex $\KK$-theory will be minimal, so we will refrain from decorating $\mathfrak I^G$ with its coefficient field.)

As we are working with real $\cs$-algebras, our initial focus is the 8-fold periodic real $\Kk$-theory functor, $\KO_*$. As $\KO_*$ takes values in an Abelian category, it is easily checked that 
the functor $\KO_*( G\ltimes \Input )\colon \KK^G\to {\bf Ab}$ is a stable homological functor in the sense of \cite[Definition 2.14]{meyer-nest-2010}; that is, 
$$\KO_*( G\ltimes (C_0\pr(\R)\otimes A)) \cong \Sigma \KO_*( G\ltimes A) \; $$
where $\Sigma$ is the shift functor applied to a graded abelian group.
Also, since $\Res_G $ and $\Ind_G$  are adjoint functors (see \cite[Section 3.2]{meyer-nest}; c.f.~also \cite[Theorem 6.2]{bonicke-advances}),  $\mathfrak I^G = \ker \Res_G $ has enough projective objects.
Moreover, $\KK^G$ has countable direct sums which are compatible with $\Res_G$ and thus $\mathfrak I_G$ and also with $\KO_*(G \ltimes \Input)$.
Consequently, for any $\cs$-algebra $A \in \KK^G$, the quadruple $(\KK^G,\mathfrak I^G, \KO_*(G \ltimes \Input), A)$ is an {\em ABC tuple} \cite[Definition 2.11]{miller-hlogy}. 

From an ABC tuple, we obtain a {convergent} spectral sequence (called the {\em ABC spectral sequence}) by \cite[Theorem 5.1]{meyer-2008}.  In our case of interest, this translates into the following result. {In Section \ref{sec:hlogy} below, we will revisit the derived functors $\mathbb L_p^{\mathfrak I^G}(\KO_q(G \ltimes A))$ introduced in the proof below, and establish an alternative description of them using group homology.}

 \begin{prop}
\label{prop:ABC-spectral}
	Let $G$ be a countable discrete group. The ABC spectral sequence of the ABC tuple $$(\KK^G, \mathfrak I^G, \KO_*(G \ltimes \underline{\quad}),A )$$ has $E^2$ page given by the derived functors 
    $$ E^2_{p,q} = \mathbb L_p^{\mathfrak I^G}(\KO_q(G \ltimes A)) $$
 with respect to $\mathfrak I^G$. Moreover, if $G$  is torsion-free and satisfies the Baum--Connes conjecture with coefficients in a real $\cs$-algebra $A$, then the ABC spectral sequence converges to $\KO_*(G \ltimes_r A)$. Furthermore, the spectral sequence is natural in $A$: morphisms $\psi \in \KK^G(A,B)$ functorially induce morphisms $\psi$ of spectral sequences such that the induced maps $\psi_n \colon  \mathbb L^{\mathfrak I^G}_n(\KO_*(G \ltimes A)) \to \mathbb L^{\mathfrak I^G}_n(\KO_*(G \ltimes B))$ converge to $\psi_*  \colon \KO_*(G \ltimes_r A) \to \KO_*(G \ltimes_r B)$.
\end{prop}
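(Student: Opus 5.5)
The plan is to take the general machinery of \cite[Theorem 5.1]{meyer-2008} (as repackaged in \cite[Section 2]{miller-hlogy}) for the ABC tuple $(\KK^G,\mathfrak I^G,\KO_*(G\ltimes\Input),A)$ and identify each output in our setting. First I verify the ingredients recorded in the paragraph preceding the proposition: $\mathfrak I^G=\ker\Res_G$ is a homological ideal with enough projectives, since $\Ind_G$ is left adjoint to $\Res_G$; $\KK^G$ has countable direct sums compatible with $\mathfrak I^G$; and $\KO_*(G\ltimes\Input)$ is a stable homological functor commuting with countable direct sums, because the full crossed product and real $\Kk$-theory both commute with direct sums. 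With these in hand, \cite[Theorem 5.1]{meyer-2008} produces a spectral sequence from a phantom tower ($\mathfrak I^G$-projective resolution) of $A$. Its $E^1$ page is $\KO_*(G\ltimes P_p)$ for projective objects $P_p$, and the $E^2$ page is the left derived functors $\mathbb L_p^{\mathfrak I^G}(\KO_q(G\ltimes\Input))(A)$. This gives the first assertion.

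For naturality in $A$, I use that any $\psi\in\KK^G(A,B)$ lifts to a morphism of phantom towers (projective resolutions), and that any two such lifts are homotopic. Since the Meyer construction is built from exact couples associated to these towers, $\psi$ induces a morphism of exact couples and hence of spectral sequences, independent of the choice of lift from $E^2$ on. Functoriality then follows from uniqueness up to homotopy. On $E^2$ the induced map is the map of derived functors, and on the abutment it is $\psi_*$ on $\KO_*(G\ltimes \mathbb L A)$, where $\mathbb L A$ is the cellular approximation.

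For convergence, \cite[Theorem 5.1]{meyer-2008} says the spectral sequence converges to $\KO_*(G\ltimes \tilde A)$, where $\tilde A\to A$ is the cellular approximation with respect to $\mathfrak I^G$ (the localizing subcategory generated by $\mathfrak I^G$-projectives, i.e.\ by induced algebras $\Ind_G B$). The main obstacle is identifying this limit with $\KO_*(G\ltimes_r A)$. I will do this in three steps.

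\begin{enumerate}
\item For torsion-free $G$, the localizing subcategory generated by $\{\Ind_G B\}$ coincides with the one generated by compactly induced algebras from finite subgroups. Hence $\tilde A\to A$ is the Dirac morphism $\mathsf{P}A\to A$ of \cite[Theorem 5.2]{meyer-nest}. I expect that Meyer--Nest's argument transfers verbatim to real $\KK^G$, and I would check this.
\item For an induced algebra, $G\ltimes\Ind_G B$ and $G\ltimes_r\Ind_G B$ agree (both are Morita equivalent to $B$). Since both crossed products commute with direct sums and preserve exact triangles, $\KO_*(G\ltimes\tilde A)\cong\KO_*(G\ltimes_r\tilde A)$ for all cellular $\tilde A$.
\item By \cite[Theorem 5.2]{meyer-nest}, $\KO_*(G\ltimes_r\tilde A)\to\KO_*(G\ltimes_r A)$ is the real assembly map, which is an isomorphism by the Baum--Connes hypothesis.
\end{enumerate}

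Naturality of the Dirac morphism in $A$ then gives compatibility of the limit maps with $\psi_*$ on $\KO_*(G\ltimes_r\Input)$.
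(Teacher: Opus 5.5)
Your proposal is correct and follows essentially the same route as the paper. Both arguments get convergence to the localization from \cite[Theorem 5.1]{meyer-2008}, use that full and reduced crossed products agree on the localizing subcategory generated by the induced algebras $\Ind_G B$, and then invoke \cite[Theorem 5.2]{meyer-nest} for torsion-free $G$ to identify the localization map with the assembly map. The only difference is that you sketch naturality in $A$ directly via lifts of phantom towers, whereas the paper simply cites \cite[Theorem 4.8]{meyer-2008} and \cite[Theorem 6.1]{miller-hlogy}.
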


\begin{rmk}
\label{rmk:full-vs-reduced}
The use of both full and reduced crossed products in the statement above (and, indeed, throughout the paper) is deliberate. 
We use the full crossed product in the ABC tuple $(\KK^G, \mathfrak I^G, \KO_*(G \ltimes \underline{\quad}),A )$ for its greater functoriality properties. 
A reduced version of this ABC tuple is used in \cite{meyer-nest} to model the Baum--Connes assembly map, with its spectral sequence converging to the topological K-theory. However, as we shall see in the following proof, the spectral sequence for the full ABC tuple also converges to the topological K-theory. Thus we use the full crossed product for the spectral sequence for better functoriality and we also use the reduced crossed product so that, after applying the Baum--Connes assembly map, we 
obtain a spectral sequence converging to the $\Kk$-theory of our crossed product.
\end{rmk}

\begin{proof}
	Theorem 5.1 of \cite{meyer-2008}
    shows that the ABC spectral sequence given by this initial data converges to the localization $\mathbb L \KO_*(G \ltimes A)$. 
   Naturality in $A$ may be deduced from \cite[Theorem 4.8]{meyer-2008} and the ensuing paragraph, and follows explicitly from \cite[Theorem 6.1]{miller-hlogy}.
     
    The natural transformation $\KO_*(G \ltimes \underline{\quad}) \Rightarrow \KO_*(G \ltimes_r \underline{\quad})$ induces a map $\kappa \colon \mathbb L \KO_*(G \ltimes A) \to \mathbb L \KO_*(G \ltimes_r A)$  natural in $A$ (see \cite[Definition 5.10]{miller-hlogy}) given by applying the natural transformation to the localization $LA \in \KK^G$. This $\cs$-algebra lives in the localizing subcategory generated by induced $G$-$\cs$-algebras $\Ind_G B$,\footnote{The localization $LA$ is written $\widetilde A$ and called a $\mathscr{CI}$-simplicial approximation of $A$ in \cite[\S 4]{meyer-nest}.} 
     which are proper $G$-algebras, on which %
     the full and reduced crossed products agree. 
    It follows that $\kappa$ is a natural isomorphism (cf.~\cite[Remark 2.6]{miller-hlogy}). 
    
By \cite[Theorem 5.2]{meyer-nest}, if $G$ is torsion-free, the Baum--Connes assembly map is naturally isomorphic to the localization map $\mu \colon \mathbb L \KO_*(G \ltimes_r A) \to \KO_*(G \ltimes_r A)$ for the reduced ABC tuple $(\KK^G, \mathfrak I^G, \KO_*(G \ltimes_r \underline{\quad}), A)$. If $G$ additionally satisfies the Baum--Connes conjecture with coefficients $A$, $\mu$ is an isomorphism. Composing $\kappa$ with $\mu$, the spectral sequence converges to $\KO_*(G \ltimes_r A)$. 

The description of the $E^2$ page follows from \cite[Theorem 4.8]{meyer-2008}.
\end{proof}

The analogous result for complex $\Kk$-theory is also true; see \cite[Examples 2.12 and 2.20]{miller-hlogy}. 

\begin{rmk}
\label{rmk:torsion-free}
The assumption that $G$ is torsion-free in Proposition \ref{prop:ABC-spectral} ensures that the homological ideal $\mathfrak I^G = \ker \Res_G$ is the right one to induce the Baum--Connes assembly map in the machinery of \cite{meyer-nest}. In general, for a countable discrete group $G$, to obtain the Baum--Connes assembly map, one should take the homological ideal 
\[ \mathfrak I^G_{\mathrm{fin}} = \bigcap_{F \leq G \, \text{finite}} \ker \Res^F_G,\]
where (for a finite subgroup $F$ of $G$) $\Res^F_G \colon \KK^G \to \KK^F$ is the (triangulated) functor which remembers only the $F$-action.
However, the resulting derived functors $\mathbb L_p^{\mathfrak I^G_{\mathrm{fin}}}(\KO_q(G \ltimes A))$ occupying the $E^2$ page become more complicated to describe.\footnote{In light of \cite{mislin-book} these should resemble Bredon homology groups.} In particular, if $G$ has torsion so that (a priori) $\mathfrak I^G_{\mathrm{fin}} \ne \mathfrak I^G$, then we cannot apply our Proposition \ref{prop:derived-is-homology} below, which gives an isomorphism $\mathbb L_p^{\mathfrak I^G}(\KO_q(G \ltimes A)) \cong \Hh_p(G;\KO_q(A))$. 
\end{rmk}

\subsection{Spectral sequences}
\label{sec:}

{
For a reader already familiar with spectral sequences, we review some of the definitions and terminology here. For more details see \cite[Chapter 5]{Weibel} and \cite{Boardman}.

{Let $\{E^r_{p,q}, d^r\}_{r \geq 2}$ be a spectral sequence starting at the second page. Recall that the differential $d^r$ has degree $(-r, r-1)$, and the data of a spectral sequence comes equipped with specified identifications $E^{r+1} \cong \Hh(E^r,d^r)$. }

The limit sheet $E^\infty_{p,q}$ is defined through the auxiliary groups $Z^r_{p,q}$ and $B^r_{p,q}$ of cycles and boundaries, which are defined inductively so that 
\[ 0 = B^2_{p,q} \subseteq B^3_{p,q} \subseteq \cdots \subseteq B^r_{p,q} \subseteq \cdots \subseteq Z^r_{p,q} \subseteq \cdots \subseteq Z^3_{p,q} \subseteq Z^2_{p,q} = E^2_{p,q} \]
with canonical isomorphisms $Z^r_{p,q}/B^r_{p,q} \cong E^r_{p,q}$. Then $$E^\infty_{p,q} := \bigcap_{r \geq 2} Z^r_{p,q} / \bigcup_{r \geq 2} B^r_{p,q}.$$
Note that if the spectral sequence stabilizes in the sense that the differentials $d^r $ vanish for all $r \geq r_0$, then $E^\infty_{p,q} = E^{r_0}_{p,q}.$

When we say that a spectral sequence converges to a graded module $(A_n)_{n \in \mathbb Z}$, we mean in the strong sense of \cite{Boardman}, i.e.~that there is a specified complete Hausdorff exhaustive filtration $(F_p A_n)_{p \in \mathbb Z}$ of $A_n$ for each $n$,
\begin{equation}\label{filtration}
\cdots \subseteq F_{p-1} A_n \subseteq F_p A_n 
\subseteq  F_{p+1} A_n 
\subseteq \cdots \subseteq A_n, 
\end{equation}
with specified isomorphisms
\begin{equation}\label{filtrationisomorphism}
F_{p} A_n / F_{p-1} A_n \cong E^\infty_{p,n-p}
\end{equation}
for each $p, n \in \mathbb Z$. Exhaustive means that $A_n = \bigcup_{p \in \mathbb Z} F_p A_n$. In our setting we will always have $F_0 A_n = 0$, which automatically implies that the filtration $(F_p A_n)_{p \in \mathbb Z}$ is complete and Hausdorff.
We denote this convergence by 
$$E^r_{p,q} \Longrightarrow A_{p+q}.$$

In this paper, we are concerned with naturality of our spectral sequences, so we will encounter morphisms of spectral sequences. 

A morphism of spectral sequences of degree $(i,j)$ is a collection of maps
$f^r_{p,q} \colon E^{r}_{p,q} \rightarrow E^{'r}_{p+i,q+j}$ that intertwines the differential maps $d^r$ and $d^{'r}$:
$$ d^{'r} \circ f^r_{p,q} = f^r_{p-r,q+r-1} \circ d^r \; $$
and such that the map $f^{r+1}_{p,q}$ is the map induced on homology by $f^{r}_{p,q}$. If not otherwise specified, a morphism of spectral sequences should be taken to have degree $(0,0)$.

A 
morphism of spectral sequences  is said to converge to a graded homomorphism $\alpha_* \colon A_* \rightarrow A'_*$ if the following hold:
\begin{enumerate}[(1)]
\item $E^r_{p,q}$ converges to $A_n$ and $E^{'r}_{p,q}$ converges to $A'_n$. In particular, this means that there are filtrations on $A$ and $A'$ as in \eqref{filtration} and isomorphisms as in \eqref{filtrationisomorphism}.
\item
The map $\alpha_n \colon A_n \rightarrow A'_n$ respects the filtrations in the sense that 
$$\alpha_n(F_p A_n) \subseteq F_p A'_n$$
for all $p$ and $n$.
\item The map induced by $\alpha_n$ on the subquotients 
$$F_p A_n/F_{p-1} A_n   \rightarrow
    F_p A'_n/F_{p-1} A'_n$$
corresponds to 
$$f^\infty_{p,q} \colon E^{\infty}_{p,q} \rightarrow E^{' \infty}_{p,q} $$
via the isomorphisms of (\ref{filtrationisomorphism}).
\end{enumerate}
(Compare to Definition~5.2.1, Lemma~5.2.4, and Theorem~5.2.12 in \cite{Weibel}.)

In Section~\ref{subsection-unitedK} we will introduce united $\Kk$-theory, which is a triple of graded abelian groups, representing three variants of $\Kk$-theory.
There we obtain a system of three spectral sequences that converges to the united $\Kk$-theory of a real crossed product (see 
Theorem~\ref{CRT-sp.seq.summary}).
}

\subsection{Group Homology}
\label{sec:hlogy-via-acyclic}
In this section, we will review some salient facts about group homology that will be used for our work. Given a discrete group $G$ and a $G$-module $M$, let $M_G$ be the coinvariants module, consisting of the $G$-orbits of $M$. That is,
$$M_G = M/K$$
where $K$ is the submodule of $M$ generated by elements of the form $m - g \cdot m$ for $m \in M$ and $g \in G$.

\begin{defn}[Definition~6.1.2 of \cite{Weibel}]
Let $G$ be a group and let $M$ be a $G$-module. Then the group homology $\Hh_*(G;M)$ is defined by
$$\Hh_*(G;M) = \Hh_*((Q_\bullet)_G) \; $$
where $Q_\bullet \to M$ is any projective resolution of $G$-modules.
\end{defn}

This does not depend on the choice of projective resolution by the Fundamental Lemma of Homological Algebra (cf.~\cite[Lemma 2.4.1]{Weibel}). 

In fact, the resolution used to compute $\Hh_*(G;M)$ does not have to be projective -- 
we can instead use any $G$-acyclic resolution,
thanks to the following Proposition. {(A $G$-module $Q$ is called \emph{$G$-acyclic} if $\Hh_i(G;Q) = 0$ for each $i \geq 1$.)} We will make use of this result in Proposition~\ref{prop:derived-is-homology} below.

\begin{prop}[cf. Remark~2.4.3 of \cite{Weibel}] \label{prop-acyclic-res}
Let $G$ be a group and let $M$ be a $G$-module. Then the group homology can be computed as
$$\Hh_*(G;M) \cong \Hh_*((Q'_\bullet)_G) \; $$
where $Q'_\bullet \to M$ is any $G$-acyclic resolution of $G$-modules. 
\end{prop}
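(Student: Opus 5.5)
The claim is that any $G$-acyclic resolution $Q'_\bullet \to M$ computes $\Hh_*(G;M)$ after taking coinvariants. The plan is to compare $Q'_\bullet$ with a projective resolution through a double complex, and to read off its two spectral sequences (equivalently, its two filtrations). Since $\Hh_*(G;\Input)$ is the left derived functor of the right exact functor $(\Input)_G = \Z \otimes_{\Z G} \Input$, this is the standard fact that acyclic resolutions compute derived functors, as in Remark~2.4.3 of \cite{Weibel}.

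\textbf{Step 1: the double complex.} Fix the acyclic resolution $\cdots \to Q'_1 \to Q'_0 \to M \to 0$. By the Horseshoe Lemma (or by taking a Cartan--Eilenberg resolution of the complex $Q'_\bullet$), choose projective $G$-module resolutions $P_{\bullet,j} \to Q'_j$ that fit into a double complex $P_{\bullet,\bullet}$ of projective $G$-modules. This construction has two properties we will use:
\begin{itemize}
\item each column $P_{\bullet,j}$ resolves $Q'_j$;
\item for each fixed $i$, the row $P_{i,\bullet}$ is a projective resolution of $P_i := \Hh_0(P_{i,\bullet})$, and $P_\bullet \to M$ is itself a projective resolution.
\end{itemize}
Apply coinvariants to get the double complex $(P_{\bullet,\bullet})_G$, and compare two ways of computing the homology of its total complex.

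\textbf{Step 2: filtering by columns.} Take homology in the resolution direction first. The column $(P_{\bullet,j})_G$ has homology $\Hh_i(G;Q'_j)$. By $G$-acyclicity this vanishes for $i \ge 1$ and equals $(Q'_j)_G$ for $i=0$. The spectral sequence therefore collapses onto a single line, and the total homology is $\Hh_*((Q'_\bullet)_G)$.

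\textbf{Step 3: filtering by rows.} Take homology in the other direction first. The row $(P_{i,\bullet})_G$ is the coinvariants of a projective resolution of the projective module $P_i$. Hence its homology is $\Hh_*(G;P_i)$, which is $(P_i)_G$ in degree $0$ and zero otherwise, since projectives are acyclic. This collapse gives total homology $\Hh_*((P_\bullet)_G) = \Hh_*(G;M)$.

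\textbf{Step 4: combining.} Comparing Steps 2 and 3 gives the isomorphism $\Hh_*(G;M)\cong \Hh_*((Q'_\bullet)_G)$. A cleaner alternative proves the same thing by dimension shifting: split $Q'_\bullet$ into short exact sequences $0 \to K_{j+1} \to Q'_j \to K_j \to 0$ with $K_0=M$. The long exact sequences in $\Hh_*(G;\Input)$, together with acyclicity of $Q'_j$, give
$$\Hh_n(G;M) \cong \Hh_{n-1}(G;K_1)\cong\cdots\cong \Hh_1(G;K_{n-1}) \cong \ker\bigl((K_n)_G \to (Q'_{n-1})_G\bigr),$$
and this is identified with $\Hh_n((Q'_\bullet)_G)$ using right exactness of coinvariants.

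The main obstacle is bookkeeping rather than substance. One must check that the double complex spectral sequences converge; they do, because everything sits in the first quadrant, so the filtrations are bounded. One must also check that the resulting isomorphism is independent of choices, and natural in maps of resolutions, since the paper will later need naturality in $G$. For that, I would note that both collapses are induced by the evident augmentation maps of total complexes, and that these are natural by the Comparison Theorem.
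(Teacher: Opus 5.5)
Your argument is correct, but your main route differs from the one the paper relies on. The paper gives no proof of its own. It cites Weibel's Remark~2.4.3 and the accompanying exercise, whose intended solution is the dimension-shifting argument you offer as an alternative in Step~4. It also records that the isomorphism is the one induced on coinvariants by the comparison chain map $Q_\bullet \to Q'_\bullet$ from a projective resolution.

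Your primary argument is instead the hyperhomology one: you resolve the complex $Q'_\bullet$ by a double complex of projectives and compare the two collapsing spectral sequences of its coinvariants. The row exactness you use in Step~1 holds for the iterated Horseshoe construction, or for a Cartan--Eilenberg resolution in which the resolutions of $\Hh_j(Q'_\bullet)=0$, $j\geq 1$, are chosen to be zero. This route is heavier but more conceptual, while dimension shifting is elementary.

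Neither argument, as written, identifies the isomorphism with the map induced by the comparison chain map. The statement does not require this, but it is exactly what the paper uses later, in the discussion after Proposition~\ref{prop-grouphomology-natural} and in Theorem~\ref{group homology theorem}, where $\Hh_n(\Omega;M)$ is computed on acyclic resolutions. In your setting this needs one more observation. Since the rows resolve the $P_i$, the augmentation $\mathrm{Tot}(P_{\bullet,\bullet}) \to P_\bullet$ is a quasi-isomorphism, so $\mathrm{Tot}(P_{\bullet,\bullet})$ is itself a projective resolution of $M$. Your zigzag through it is therefore homotopic to the comparison map. Your closing remark about augmentations gestures at this without pinning it down.

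The most direct proof in the paper's form is via the mapping cone of the comparison map. It is an exact, bounded-below complex of $G$-acyclic modules, so its cycle modules are inductively $G$-acyclic. Hence it stays exact after taking coinvariants, and the comparison map is a quasi-isomorphism on coinvariants.
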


We refer the reader to Remark~2.4.3 of \cite{Weibel} (as well as the linked exercise) for the proof of this proposition, but the isomorphism can be described as follows. Suppose that $Q_\bullet \to M$ is a projective resolution and $Q_\bullet' \to M$ is a $G$-acyclic resolution. Then there is a unique (up to chain homotopy) chain map $Q_\bullet \to Q_\bullet'$ over $M$ and this chain map then induces an isomorphism in homology $\Hh_n((Q_\bullet)_G) \to \Hh_n((Q_\bullet')_G)$. 

Now, we turn to a discussion of functoriality of group homology with respect to homomorphisms. If $\Omega \colon G \to H$ is a homomorphism of groups, then an $H$-action on a group $M$ pulls back to a $G$-action on $M$ by
$$g \cdot m = \Omega(g) \cdot m\; .$$ 
The resulting $G$-module is denoted by $\Res^G_H M$. 

\begin{prop} \label{prop-grouphomology-natural}
Let $\Omega \colon G \to H$ be a homomorphism of discrete groups. Then there is a natural induced function
$$\Hh_n(\Omega; M) \colon \Hh_n(G; \Res^G_H M) \to \Hh_n(H; M). $$
\end{prop}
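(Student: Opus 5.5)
We construct $\Hh_n(\Omega;M)$ at chain level from projective resolutions, using the Fundamental Lemma of Homological Algebra as in the discussion after Proposition~\ref{prop-acyclic-res}.

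Choose a projective resolution $P_\bullet \to \Res^G_H M$ of $G$-modules and a projective resolution $Q_\bullet \to M$ of $H$-modules. Applying $\Res^G_H$ to $Q_\bullet \to M$ gives a complex of $G$-modules that is still exact, since restriction does not change the underlying abelian groups. It is therefore a resolution of $\Res^G_H M$ by $G$-modules, though not necessarily a projective one. The Fundamental Lemma (comparison theorem, \cite[Lemma 2.4.1]{Weibel}) only needs the source to be projective and the target to be exact. So there is a chain map $\phi_\bullet \colon P_\bullet \to \Res^G_H Q_\bullet$ of $G$-modules lifting $\id_M$, and it is unique up to $G$-equivariant chain homotopy.

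Next we pass to coinvariants. For any $H$-module $N$, the identity map of $N$ induces a homomorphism $(\Res^G_H N)_G \to N_H$. This is well defined because the $G$-coinvariant relations $n - \Omega(g)\cdot n$ are among the $H$-coinvariant relations. The homomorphism is natural in $N$, so it gives a chain map $(\Res^G_H Q_\bullet)_G \to (Q_\bullet)_H$. Composing with $(\phi_\bullet)_G$ yields a chain map $(P_\bullet)_G \to (Q_\bullet)_H$. We define $\Hh_n(\Omega;M)$ to be the map it induces on homology. Coinvariants is an additive functor, so it preserves chain homotopies, and hence the induced map does not depend on the choice of $\phi_\bullet$.

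It remains to check two things.

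First, the map is independent of the choice of resolutions. If $P'_\bullet$ and $Q'_\bullet$ are other choices, the comparison isomorphisms $P_\bullet \simeq P'_\bullet$ and $Q_\bullet \simeq Q'_\bullet$ give squares that commute up to homotopy. This follows from the uniqueness part of the Fundamental Lemma, because both composites are lifts of $\id_M$ from a projective complex to an exact one.

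Second, the map is natural. Let $f \colon M \to M'$ be a map of $H$-modules. Lifting $f$ to the resolutions gives squares that again commute up to homotopy, by the same uniqueness argument. The coinvariant comparison map $(\Res^G_H N)_G \to N_H$ is natural in $N$, so these squares survive passage to coinvariants. Functoriality in $\Omega$ can be checked the same way, but it is not needed for the statement.

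None of these steps is hard. The one point that needs care is the observation that $\Res^G_H Q_\bullet$ need not be $G$-projective. Because of this, the lift must go from $P_\bullet$ to $\Res^G_H Q_\bullet$ and not in the other direction.
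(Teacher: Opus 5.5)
Your proposal is correct and follows the same construction as the paper: you use projectivity of $P_\bullet$ and exactness of $\Res^G_H Q_\bullet$ to lift the identity of $\Res^G_H M$ to a $G$-chain map $P_\bullet \to \Res^G_H Q_\bullet$, compose it on coinvariants with the natural map $(\Res^G_H Q_\bullet)_G \to (Q_\bullet)_H$, and pass to homology. Your extra checks, independence of the choices and naturality in $M$ via the uniqueness-up-to-homotopy part of the comparison theorem, fill in details that the paper's sketch leaves implicit.
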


We sketch here the construction of $\Hh_n(\Omega;M)$.
Let $P_\bullet \to \Res^G_H M$ be any projective resolution (of $G$-modules) and $Q_\bullet \to M$ be any projective resolution (of $H$-modules), used to define the corresponding homology groups. By applying the restriction functor to the latter resolution, we obtain a new resolution of $G$-modules,
$$\Res^G_H Q_\bullet \to \Res^G_H M  ,$$
and by projectivity there exists a chain map $f_\bullet$ of resolutions
\[\begin{tikzcd}
	{P_\bullet} & {\Res^G_H M} \\
    {\Res^G_H Q_\bullet} & {\Res^G_H M}. 
	\arrow["f_\bullet", from=1-1, to=2-1]
    \arrow[from=1-1, to=1-2]
    \arrow[ from=2-1, to=2-2]
    \arrow["=", from=1-2, to=2-2]
\end{tikzcd}\] 

For any $H$-module $N$ the canonical and natural map $\Res^G_H N \to N_H$ is trivially $G$-invariant, giving us a natural map $\mu_N \colon (\Res^G_H N)_G \to N_H$. In particular, applying this construction to $Q_\bullet$ and $M$ we obtain 
the vertical chain maps in the commutative diagram below.
\[\begin{tikzcd}
    {(\Res^G_H Q_\bullet)_G} & {(\Res^G_H M)_G} \\
    {(Q_\bullet)_H} & {M_H} 
	\arrow["\mu_{Q_\bullet}", from=1-1, to=2-1]
    \arrow[from=1-1, to=1-2]
    \arrow[ from=2-1, to=2-2]
    \arrow["\mu_M", from=1-2, to=2-2]
\end{tikzcd}\]
Then the map $\Hh_n(\Omega;M)$ is induced on homology by the chain map 
\begin{equation} \mu_{Q_\bullet} \circ (f_\bullet)_G \colon (P_\bullet)_G \to (Q_\bullet)_H  .
	\label{eq:Hn-Omega}
	\end{equation} 

We also point out that in this construction we can take $P_\bullet$ to be a $G$-acyclic resolution and we can take $Q_\bullet$ to be an $H$-acyclic resolution, provided 
that the chain map $f_\bullet\colon P_\bullet \to Q_\bullet$ can be constructed, which may not always be the case for $G$-acyclic resolutions.

Indeed, suppose that $P'_\bullet \to \Res^G_H M$ is a $G$-acyclic resolution, $Q'_\bullet \to M$ is an $H$-acyclic resolution and $f'_\bullet \colon P'_\bullet \to \Res^G_H Q'_\bullet$ is a chain map of resolutions. By the Fundamental Lemma of Homological Algebra, the unique (up to chain homotopy) chain maps $P_\bullet \to P'_\bullet$ and $Q_\bullet \to Q'_\bullet$ of resolutions induce a diagram
\[\begin{tikzcd}
	{(P_\bullet)_G} & {(P'_\bullet)_G} \\
	{(\Res^G_H Q_\bullet)_G} & {(\Res^G_H Q'_\bullet)_G} \\
	{(Q_\bullet)_H} & {(Q'_\bullet)_H}
	\arrow[from=1-1, to=1-2]
	\arrow["{f_\bullet}", from=1-1, to=2-1]
	\arrow["{f'_\bullet}", from=1-2, to=2-2]
	\arrow[from=2-1, to=2-2]
	\arrow["{\mu_{Q_\bullet}}", from=2-1, to=3-1]
	\arrow["{\mu_{Q'_\bullet}}", from=2-2, to=3-2]
	\arrow[from=3-1, to=3-2]
\end{tikzcd}\] 
which commutes up to chain homotopy. Thus, using $\Hh_n((P'_\bullet)_G)$ and $\Hh_n((Q'_\bullet)_H)$ to compute $\Hh_n(G;\Res^G_H M)$ and $\Hh_n(H;M)$ respectively, the map $\Hh_n(\Omega;M)$ is induced on homology by the chain map 
$$\mu_{Q'_\bullet} \circ (f'_\bullet)_G \colon (P'_\bullet)_G \to (Q'_\bullet)_H .$$

\section{ABC derived functors as homology}\label{sec:hlogy}

Let $G$ be a countable discrete group and let $A$ be a separable real $G$-$\cs$-algebra. For this section we will not need any further assumptions on $G$.

In Proposition \ref{prop:ABC-spectral},  certain derived functors $\mathbb L^{\mathfrak I^G}_n\KO_i(G \ltimes  A)$  show up on the $E^2$ page of a spectral sequence, which converges to the operator K-theory $\KO_*(G \ltimes_r A)$, when $G$ is torsion-free and satisfies the Baum--Connes conjecture with coefficients in $A$. We show in Proposition \ref{prop:derived-is-homology} below that, even without these additional assumptions, these derived functors are simply given by group homology groups, and that moreover (Theorem \ref{group homology theorem}) this identification is functorial with respect to group homomorphisms.

The derived functors $\mathbb L_n^{\mathfrak I^G} \KO_i(G \ltimes  \Input)$ are defined as follows on $\KK^G$. Let $A \in \KK^G$ be a separable real $G$-$\cs$-algebra and let $P_\bullet \to A$ be an $\mathfrak I^G$-projective resolution of $A$ in $\KK^G$. This means we have a chain complex 
$$ \cdots \to P_n \to \cdots \to P_0 \to A \to 0$$ 
which is $\mathfrak I^G$-exact in the sense of \cite[Definition 3.7]{meyer-nest-2010} with each $P_n$ an $\mathfrak I^G$-projective object in the sense of \cite[Definition 3.21]{meyer-nest-2010}. The $n$th derived functor $\mathbb L_n^{\mathfrak I^G} \KO_i(G \ltimes A)$ is given by the $n$th homology of the induced chain complex 
\[ \cdots \to \KO_i(G \ltimes P_n) \to \cdots \to \KO_i(G \ltimes P_0) \to 0. \]
A different choice of $\mathfrak I^G$-projective resolution produces a canonically isomorphic sequence of homology groups by \cite[Proposition 3.26]{meyer-nest-2010}.  
The same proposition is also used to define functoriality: given a morphism $f \colon A \to B$ in $\KK^G$ and $\mathfrak I^G$-projective resolutions $P_\bullet \to A$ and $Q_\bullet \to B$, \cite[Proposition 3.26]{meyer-nest-2010} guarantees the existence of a chain map $P_\bullet \to Q_\bullet$ over $f$, which is used to produce a homomorphism $\mathbb L_n^{\mathfrak I^G} \KO_i(G \ltimes f) \colon \mathbb L_n^{\mathfrak I^G} \KO_i(G \ltimes A) \to \mathbb L_n^{\mathfrak I^G} \KO_i(G \ltimes B)$.

We will make use of an explicit $\mathfrak I^G$-projective resolution $P_\bullet \to A$ constructed from the induction-restriction adjunction $\Ind_G \dashv \Res_G$. For $A \in \KK^G$ we have $\Res_G A = A \in \KK$ (forgetting the $G$-structure).  
For a separable real $\cs$-algebra $B \in \KK$ (with no $G$-action), we have 
\begin{equation}
 \Ind_G(B) = C_0(G, B) = C_0\pr(G) \otimes B\label{eq:Ind-G-orig} 
\end{equation}
where the $G$-action $\alpha \colon G \rightarrow \Aut(\Ind_G B)$ is defined by 
$$(\alpha(g) ( f))(h) = f(g^{-1}h)$$ 
for $g \in G$ and $f \in C_0(G, B)$.
When $B = \Res_G(A)$, \eqref{eq:Ind-G-orig} becomes 
\begin{equation*}\Ind_G \Res_G A = C_0(G, \Res_G(A)) = C_0\pr(G) \otimes \Res_G(A).
\end{equation*}

As discussed in \cite[Section 3.2]{meyer-nest} (cf.~also \cite[Theorem 6.2]{bonicke-advances} in the setting where $H = \{e\}$), 
$\Ind_G$ and $\Res_G$ are adjoint functors.  Consequently, for any $A \in \KK^G$, a standard construction in homological algebra (\cite[Section 8.6]{Weibel} or \cite[Section 3.1]{proietti-yamashita-2022}) yields an $\frak I^G$-projective resolution $P_\bullet \to A$:  we have 
\[P_n = (\Ind_G\Res_G)^{n+1}(A),\]
and the morphism $\delta_n\colon P_n \to P_{n-1}$ for $n \geq 0$ (with the convention that $P_{-1} = A$) is an alternating sum of the ``face maps''  as described in the proof of \cite[Theorem 3.1]{proietti-yamashita-2022}. The face maps are natural with respect to morphisms $A \to B$ in $\KK^G$.

{We have one more ingredient. Given a separable real $G$-$\cs$-algebra $A \in \KK^G$ the inclusion $A \to G \ltimes A$ induces a map $\KO_i(A) \to \KO_i(G \ltimes A)$ in K-theory which is $G$-invariant by invariance of K-theory under unitary conjugation. It therefore descends to a homomorphism $\nu_A \colon \KO_i(A)_G \to \KO_i(G \ltimes A)$ which is natural with respect to morphisms in $\KK^G$.} 

The following lemma will be needed in the proof of Proposition \ref{prop:derived-is-homology} below.  This result is unsurprising and may be known to experts, but we include it here for completeness.
Note that for any discrete group $G$ we have $G \ltimes_{\rm{lt}} C_0\pr(G) \cong \mathcal \K(\ell^2\sr(G)) \cong G \ltimes_{\rm{lt}, r} C_0\pr(G)$, so we refrain from distinguishing between  the full and crossed products here.  

\begin{lemma}
	\label{lem:takai}
	For any real $\cs$-algebra $A$ the inclusion
	$$\iota_A \colon A \hookrightarrow 
	G \ltimes_{\rm{lt}} \Ind_G(A) $$
    given by the composition of the inclusions $A \hookrightarrow \Ind_G(A)$ (by $a \mapsto \delta_e \otimes a$) and $\Ind_G(A) \hookrightarrow G \ltimes_{\rm{lt}, r} \Ind_G(A)$
	is an isomorphism on $\Kk$-theory.
\end{lemma}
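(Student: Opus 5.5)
The plan is to identify $G \ltimes_{\rm lt} \Ind_G(A)$ explicitly as $\K(\ell^2\sr(G)) \otimes A$ and to check that under this identification $\iota_A$ becomes a rank-one corner embedding. Since $A$ carries no $G$-action here, $\Ind_G(A) = C_0\pr(G) \otimes A$ with $G$ acting only on the first tensor factor. Because crossed products by a discrete group commute with tensoring by a trivially acted-on $\cs$-algebra, we get
\[ G \ltimes_{\rm lt} (C_0\pr(G) \otimes A) \cong (G \ltimes_{\rm lt} C_0\pr(G)) \otimes A \cong \K(\ell^2\sr(G)) \otimes A. \]
Here we use the isomorphism $G \ltimes_{\rm lt} C_0\pr(G) \cong \K(\ell^2\sr(G))$ noted just before the lemma. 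Full and reduced crossed products agree in this situation. One way to see this: the left translation action is proper, or the identification with $\K$ holds for both and tensoring with $A$ (a nuclear factor $\K$) is unambiguous. The same argument works in the real case as in the complex case.

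The key step is to track the inclusion. The isomorphism $G \ltimes_{\rm lt} C_0\pr(G) \to \K(\ell^2\sr(G))$ is the regular (Schr\"odinger-type) representation: $f \in C_0\pr(G)$ acts by multiplication on $\ell^2\sr(G)$ and $u_g$ acts by left translation. Under this map the function $\delta_e \in C_0\pr(G)$ goes to the rank-one projection $e_{e,e}$ onto $\delta_e \in \ell^2\sr(G)$. So the composite $a \mapsto \delta_e \otimes a \mapsto u_e(\delta_e \otimes a)$ corresponds to $a \mapsto e_{e,e} \otimes a \in \K(\ell^2\sr(G)) \otimes A$. This is exactly the standard corner embedding.

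The final step is to invoke stability of real $\Kk$-theory: for a rank-one projection $p \in \K(\mathcal H\sr)$ on a real separable Hilbert space, the map $a \mapsto p \otimes a$ induces an isomorphism $\KO_*(A) \to \KO_*(\K \otimes A)$. This is standard for real $\cs$-algebras (for instance from $\KK$-theory, or directly from the matrix-algebra definition and continuity). If $G$ is finite, $\K(\ell^2\sr(G)) = M_{|G|}(\R)$ and the statement is Morita invariance for matrix algebras. Naturality in $A$ of the whole identification also follows, though it is not needed here.

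I expect the main obstacle to be bookkeeping rather than substance. We must verify that the explicit isomorphism really sends $\delta_e \otimes a$ to $e_{e,e} \otimes a$ and not to some other rank-one projection, which would be harmless anyway since all rank-one projections are unitarily equivalent in the multiplier algebra. We must also confirm that the tensor factorisation of the crossed product is valid for real algebras. I would handle the second point by checking it after complexification, since real crossed products complexify to complex crossed products, as recalled earlier in the paper.
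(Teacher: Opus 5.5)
Your proposal is correct and follows the same route as the paper. The paper likewise rewrites $\iota_A$ as $\iota_{\R}\otimes\id_A\colon A\to (G\ltimes_{\rm lt} C_0\pr(G))\otimes A$, uses the explicit isomorphism $G\ltimes_{\rm lt} C_0\pr(G)\cong\K(\ell^2\sr(G))$ to see that $1\mapsto\delta_{e,e}$, and concludes because the image is a full corner; your added care about the tensor factorisation and about full versus reduced crossed products fills in details the paper leaves implicit.
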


\begin{proof} 
Observe that $\iota_A$ is equivalent to the factorization 
$$ \iota_\R \otimes \id_A \colon A \rightarrow (G \ltimes_{\rm{lt}} C_0\pr(G)) \otimes A ,$$ in light of Equation \eqref{eq:Ind-G-orig}. From \cite[Theorem 7]{boersema-rmj} (see also Lemma~7.5 of \cite{williamsbook} in the complex case) there is an isomorphism 
$\Theta \colon G \ltimes_{\text{lt}} C_0\pr(G)
	\rightarrow \K(\ell\sr^2(G))  $	
given by
	$$\Theta(f)(h)(r) = \sum_{s \in G} f(s)(r) h(s^{-1} r). $$
	Using this, it is routine to show that the composition
    $$ \R \xrightarrow{\iota_\R} {G \ltimes_{\rm{lt}} \Ind_G(\R)}
        \xrightarrow{\Theta} \K(\ell\sr^2(G)) $$   
	is given by $1 \mapsto \delta_{e,e} $. It follows that $\iota_A$ maps to a full corner of 
    $G \ltimes_{\rm{lt}} \Ind_G(A)$ and thus is an isomorphism on $\Kk$-theory.
\end{proof}

\begin{proposition}\label{prop:derived-is-homology}
Let $G$ be a countable discrete group, let $A \in \KK^G$ be a separable real $G$-$\cs$-algebra and consider the homological ideal $$\mathfrak I^G = \ker (\Res_G \colon \KK^G \to \KK),$$ and the  $\mathfrak I^G$-projective resolution $P_\bullet \to A$ with $P_n = (\Ind_G \Res_G)^{n+1} A$. Then the homology of the chain complex
\begin{equation}\label{homology chain complex}
\cdots \to \KO_i(P_n)_G \to \cdots \to \KO_i(P_0)_G \to 0 
\end{equation}
computes $\Hh_n(G;\KO_i(A))$. Moreover, the chain map 
\[ \nu_{P_\bullet} \colon \KO_i(P_\bullet)_G \to \KO_i(G \ltimes P_\bullet) \]
is an isomorphism, inducing an isomorphism 
\begin{equation}\label{eq:isomorphism in homology}
\Hh_n(G;\KO_i(A)) \to \mathbb L_n^{{\mathfrak I}^G} \KO_i(G \ltimes A) 
\end{equation}
in homology. This isomorphism is natural with respect to morphisms in $\KK^G$.
\end{proposition}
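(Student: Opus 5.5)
Apply $\KO_i$ to the resolution $P_\bullet \to A$, obtaining a complex of $G$-modules $\KO_i(P_\bullet) \to \KO_i(A) \to 0$. The plan has three parts: (a) show this complex is an exact, hence acyclic, resolution of $\KO_i(A)$ by $G$-acyclic modules, so that Proposition \ref{prop-acyclic-res} gives the first claim; (b) show $\nu_{P_n}$ is an isomorphism for each $n$; (c) deduce the isomorphism \eqref{eq:isomorphism in homology} and its naturality.

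\emph{Step (a): exactness and acyclicity.} Exactness follows because $\KO_i$ factors through $\Res_G$ and $P_\bullet \to A$ is $\mathfrak I^G$-exact. Concretely, after applying $\Res_G$ the standard bar resolution coming from the adjunction $\Ind_G \dashv \Res_G$ acquires an extra degeneracy (a contracting homotopy built from the unit $\Res_G A \to \Res_G\Ind_G\Res_G A$). Hence $\KO_i(\Res_G P_\bullet) \to \KO_i(A)$ is contractible as a complex of abelian groups, and in particular exact.

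For acyclicity, write $P_n = \Ind_G B_n$ with $B_n = \Res_G(\Ind_G\Res_G)^n A$. Then $\KO_i(P_n) = \KO_i(C_0(G,B_n)) \cong \bigoplus_{g\in G}\KO_i(B_n)$, using that K-theory commutes with countable direct sums. $G$ acts on this by translating coordinates, so $\KO_i(P_n) \cong \Z[G]\otimes \KO_i(B_n)$ is an induced module. Shapiro's lemma then gives $\Hh_j(G;\KO_i(P_n)) = \Hh_j(\{e\};\KO_i(B_n)) = 0$ for $j\ge 1$. Proposition \ref{prop-acyclic-res} now shows that the homology of \eqref{homology chain complex} is $\Hh_n(G;\KO_i(A))$.

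\emph{Step (b): $\nu_{P_n}$ is an isomorphism.} This is the key step. For $P = \Ind_G B$ we have $\KO_i(P)_G \cong (\bigoplus_G \KO_i(B))_G \cong \KO_i(B)$, via the summation map, inverse to inclusion at the coordinate $e$. The composite
\[ \KO_i(B) \cong \KO_i(P)_G \xrightarrow{\nu_P} \KO_i(G\ltimes P) \]
is induced by $B \to \Ind_G B \to G \ltimes \Ind_G B$, $b\mapsto \delta_e\otimes b$, which is exactly $\iota_B$ of Lemma \ref{lem:takai}. That lemma shows it is an isomorphism, so $\nu_P$ is an isomorphism. The full crossed product coincides with the reduced one here, as noted before the lemma. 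Since $\nu$ is natural with respect to $\KK^G$-morphisms, $\nu_{P_\bullet}$ commutes with the differentials and is therefore an isomorphism of chain complexes.

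\emph{Step (c): the isomorphism and naturality.} Taking homology, $\nu_{P_\bullet}$ gives $\Hh_n(G;\KO_i(A)) \to \mathbb L_n^{\mathfrak I^G}\KO_i(G\ltimes A)$, by the definition of the derived functor via the resolution $P_\bullet$.

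For naturality, take a morphism $f\colon A\to B$ in $\KK^G$. The construction $(\Ind_G\Res_G)^{n+1}$ and its face maps are functorial in $\KK^G$, so $f$ induces a chain map $P_\bullet(A)\to P_\bullet(B)$ over $f$. By \cite[Proposition 3.26]{meyer-nest-2010}, this chain map computes the induced map on derived functors. On the group-homology side, $\KO_i$ of this chain map is a chain map of $G$-acyclic resolutions over $\KO_i(f)$, so it computes $\Hh_n(G;\KO_i(f))$ by the comparison described after Proposition \ref{prop-acyclic-res}. Naturality of $\nu$ then makes the resulting square commute.

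The main obstacle is expected in step (b): the $G$-equivariant identification $\KO_i(\Ind_G B)\cong\bigoplus_G \KO_i(B)$ must be made precise, and the composite must be checked to match $\iota_B$ with the correct convention for the action.
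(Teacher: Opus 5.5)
Your proposal is correct and follows essentially the same route as the paper. Both arguments use the same three ingredients: induced-module structure plus Shapiro's lemma for $G$-acyclicity of $\KO_i(P_n)$; the factorization of $\KO_i(\iota_{R_n})$ through $\KO_i(P_n)_G$, together with Lemma \ref{lem:takai}, to show $\nu_{P_n}$ is an isomorphism; and naturality via the functorial bar resolution and naturality of $\nu$. The only variation is in the exactness of $\KO_i(P_\bullet)\to\KO_i(A)$: you justify it directly through the extra degeneracy coming from the unit of $\Ind_G\dashv\Res_G$, whereas the paper cites \cite[Lemma 3.9]{meyer-nest-2010} together with the fact that $\KO_*$ vanishes on $\mathfrak I^G$.
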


\begin{proof}
For the first claim, we note that $\KO_* \colon \KK^G \to \gMod{}_*$ is a stable homological functor which vanishes on $\mathfrak I^G$. It follows from \cite[Lemma 3.9]{meyer-nest-2010} that since 
$P_\bullet \to A$ is $\mathfrak I^G$-exact,  $\KO_i(P_\bullet) \to \KO_i(A)$ is a resolution of $G$-modules. As a $G$-module, $\KO_i(P_n)$ is isomorphic to $\bigoplus_G \KO_i(R_n)$, where $R_n = \Res_G (\Ind_G \Res_G)^n A {= \Res_G P_{n-1}}$, whose $G$-action permutes the coordinates. That is, $\KO_i(P_n)$ agrees with the  induced module $\Ind_G \KO_i(R_n)$, which is $G$-acyclic by Shapiro's Lemma (cf.~\cite[Proposition III.6.2]{brownbook}). Therefore, by Proposition~\ref{prop-acyclic-res}, the chain complex \eqref{homology chain complex} computes $\Hh_n(G;\KO_i(A))$.

The inclusion $\iota_{R_n} \colon R_n \to G \ltimes P_n$ is the composition of the inclusion of $R_n$ into $P_n { = \Ind_G R_n}$ and the inclusion of $P_n$ into $G \ltimes P_n$. Consider the following commutative diagram:
\[\begin{tikzcd}
	{\KO_i(R_n)} & {\KO_i(P_n)} & {\KO_i(P_n)_G} \\
	& {\KO_i(G \ltimes P_n) }
	\arrow[bend left, "\cong"', from=1-1, to=1-3]
	\arrow[from=1-1, to=1-2]
	\arrow[from=1-2, to=1-3]
	\arrow["\KO_i(\iota_{R_n})"', "\cong",  from=1-1, to=2-2]
	\arrow[two heads, from=1-2, to=1-3]
	\arrow[from=1-2, to=2-2]
	\arrow["\nu_{P_n}", from=1-3, to=2-2]
\end{tikzcd}\]
The map $\iota_{R_n}$ induces an isomorphism in K-theory by Lemma \ref{lem:takai}. It is straightforward to show that the composed top arrow is an isomorphism under the identification $\KO_i(P_n) \cong \bigoplus_G \KO_i(R_n)$.
It follows that $\nu_{P_n}$
is an isomorphism.

For naturality, let $\alpha \colon A \to B$ be a morphism in $\KK^G$, and let $Q_\bullet \to B$ be the analogous 
$\mathfrak I^G$-projective resolution of $B$ with $Q_n = (\Ind_G \Res_G)^{n+1} B$ for $n \geq 0$. By functoriality of $\Ind_G \Res_G$, we obtain a map $f_\bullet \colon P_\bullet \to Q_\bullet$, which is a chain map over $\alpha$ by naturality of $\delta_n \colon P_n \to P_{n-1}$ for $n \geq 0$. By definition, $\mathbb L_n^{{\mathfrak I}^G} \KO_i(G \ltimes \alpha)$ is the map induced by $\KO_i(G \ltimes f_\bullet)$ in homology. Moreover, as $\KO_i(f_\bullet) \colon \KO_i(P_\bullet) \to \KO_i(Q_\bullet)$ is a chain map of $G$-acyclic resolutions over $\KO_i(\alpha)$, $\KO_i(f_\bullet)_G$ induces $\Hh_n(G;\KO_i(\alpha))$ in homology. Naturality of \eqref{eq:isomorphism in homology} now follows from naturality of $\nu$ applied to $f_\bullet$.
\end{proof}

Now, let $\Omega \colon G \to H$ be a homomorphism of countable discrete groups, and $A$ be a separable real $H$-$\cs$-algebra.
We obtain an ABC morphism \cite[Definition 5.7]{miller-hlogy}
$$(\KK^G,\mathfrak I^G, \KO_*(G \ltimes \Input),\Res^G_H A) \to (\KK^H,\mathfrak I^H, \KO_*(H \ltimes \Input),A)$$ 
as follows. The homomorphism $\Omega \colon G \to H$ induces a $*$-homomorphism 
$$\omega\colon G \ltimes \Res^G_H B \to H \ltimes  B$$ 
which is natural in $B \in \KK^H$. Let $\omega_* \colon \KO_i(G \ltimes \Res^G_H \Input) \Rightarrow \KO_i(H \ltimes \Input)$ be the induced  natural transformation. {Note that $\Res^G_H(\mathfrak I^H) \subseteq \mathfrak I^G$ because $\Res_G \Res^G_H = \Res_H \colon \KK^H \to \KK$.} Then 
\begin{equation}\label{ABC morphism}
\ABC(\Omega) := (\Res^G_H,\omega_*,\id_{\Res^G_H A}) 
\end{equation}
is the desired ABC morphism. 

Consequently, \cite[Theorem 6.1]{miller-hlogy} tells us that $\ABC(\Omega)$ induces maps 
\[ \mathbb L_n(\Omega) \colon \mathbb L_n^{{\mathfrak I}^G}(\KO_*(G \ltimes A)) \to \mathbb L_n^{{\mathfrak I}^H}(\KO_*(H \ltimes A))\]
on the $E^2$ page of the associated spectral sequences, which converge to the map 
$\mathbb L(\Omega)\colon \mathbb L^{\mathfrak I^G} \KO_*(G \ltimes A) \to \mathbb L^{\mathfrak I^H} \KO_*(H \ltimes A)$.
The next result (cf.~\cite[Theorem 5.14]{miller-hlogy} for the complex case) tells us that these maps on the $E^2$ page  are in fact the same as the maps  $\Hh_n(G; \KO_*(A)) \to \Hh_n(H; \KO_*(A)) $  induced by $\Omega\colon G \to H$. 

\begin{thm}\label{group homology theorem}
	Let $G, H$ be countable discrete groups 
	and let $\Omega\colon G \to H$ be a group homomorphism. 
	Then the isomorphism of Proposition \ref{prop:derived-is-homology} is natural with respect to $\Omega$. 
	That is, for any separable real $H$-$\cs$-algebra $A$, the diagram below commutes.
	\label{thm:group-naturality}
\begin{equation} \label{eq:diagram}
\begin{tikzcd}[column sep = 2cm]
	\Hh_n(G;\KO_i(A))  &
	 \Hh_n(H;\KO_i(A)) \\
	\mathbb L_n^{{\mathfrak I}^G} \KO_i(G \ltimes A) &
	\mathbb L_n^{{\mathfrak I}^H} \KO_i(H \ltimes A) 
	\arrow["{\cong}", from=1-1, to=2-1]
	\arrow["{\cong}", from=1-2, to=2-2]
	\arrow["{\Hh_n(\Omega;\KO_i(A))}", from=1-1, to=1-2]
	\arrow["{\mathbb L_n(\Omega)}", from=2-1, to=2-2]
\end{tikzcd}
\end{equation}
\end{thm}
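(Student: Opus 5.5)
We compute both horizontal maps in \eqref{eq:diagram} with one explicit chain map, and compare them through the isomorphisms $\nu$ of Proposition \ref{prop:derived-is-homology}. Let $P_\bullet \to \Res^G_H A$ be the resolution with $P_n = (\Ind_G\Res_G)^{n+1}\Res^G_H A$ in $\KK^G$. Let $Q_\bullet \to A$ be the resolution with $Q_n = (\Ind_H\Res_H)^{n+1}A$ in $\KK^H$.

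First we recall how $\mathbb L_n(\Omega)$ is computed, following \cite[Theorem 6.1]{miller-hlogy}. Since $\Res^G_H(\mathfrak I^H)\subseteq \mathfrak I^G$, the complex $\Res^G_H Q_\bullet \to \Res^G_H A$ is $\mathfrak I^G$-exact. It need not be $\mathfrak I^G$-projective, but \cite[Proposition 3.26]{meyer-nest-2010} still gives a chain map $f_\bullet\colon P_\bullet \to \Res^G_H Q_\bullet$ over $\id$, unique up to homotopy. This only needs projectivity of the source and exactness of the target. Then $\mathbb L_n(\Omega)$ is the map induced on homology by
\[ \KO_i(G\ltimes P_\bullet) \xrightarrow{\KO_i(G\ltimes f_\bullet)} \KO_i(G\ltimes \Res^G_H Q_\bullet) \xrightarrow{\omega_*} \KO_i(H\ltimes Q_\bullet). \]
Checking that this is indeed the $E^2$ map of the ABC morphism \eqref{ABC morphism} is a matter of unwinding \cite[Definition 5.7, Theorem 6.1]{miller-hlogy}.

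Next we compute $\Hh_n(\Omega;\KO_i(A))$ with the same data. By \cite[Lemma 3.9]{meyer-nest-2010}, $\KO_i(P_\bullet)\to \KO_i(A)$ is a $G$-acyclic resolution (proof of Proposition \ref{prop:derived-is-homology}). Likewise $\KO_i(Q_\bullet)\to\KO_i(A)$ is an $H$-acyclic resolution. Moreover $\KO_i(f_\bullet)\colon \KO_i(P_\bullet)\to \Res^G_H\KO_i(Q_\bullet)$ is a chain map of resolutions over the identity of $\KO_i(A)$, since $\KO_i$ ignores the group action. By the discussion following Proposition \ref{prop-grouphomology-natural}, $\Hh_n(\Omega;\KO_i(A))$ is therefore induced by $\mu_{\KO_i(Q_\bullet)}\circ \KO_i(f_\bullet)_G$. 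The one point to check is that this discussion applies when a chain map between acyclic resolutions merely exists; it does, since the argument there only compares with projective resolutions via the Fundamental Lemma.

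It then suffices to show that the square of chain maps
\[ \nu_{Q_\bullet}\circ \mu_{\KO_i(Q_\bullet)}\circ \KO_i(f_\bullet)_G = \omega_*\circ \KO_i(G\ltimes f_\bullet)\circ \nu_{P_\bullet} \]
commutes degreewise, since the vertical maps in \eqref{eq:diagram} are induced by $\nu_{P_\bullet}$ and $\nu_{Q_\bullet}$. By naturality of $\nu$ (for $G$) applied to $f_\bullet$, we have $\KO_i(G\ltimes f_\bullet)\circ\nu_{P_\bullet} = \nu_{\Res^G_H Q_\bullet}\circ \KO_i(f_\bullet)_G$. This reduces the claim to a single identity for each $H$-algebra $B$:
\[ \omega_*\circ \nu^G_{\Res^G_H B} = \nu^H_B\circ \mu_{\KO_i(B)} \colon \KO_i(B)_G \to \KO_i(H\ltimes B). \]
This identity holds because $\omega\colon G\ltimes \Res^G_H B\to H\ltimes B$ restricts to the identity on the copy of $B$. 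Both sides are therefore induced by the inclusion $B\to H\ltimes B$ on $\KO_i(B)$, and both factor through the surjection $\KO_i(B) \to \KO_i(B)_G$.

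The main obstacle is the first step: identifying $\mathbb L_n(\Omega)$, as defined abstractly in \cite{miller-hlogy}, with the concrete chain-level formula through $f_\bullet$, given that $\Res^G_H Q_\bullet$ is not $\mathfrak I^G$-projective. We plan to handle this by invoking the description in \cite[Theorem 6.1]{miller-hlogy} of the $E^2$ map via any chain map from a projective resolution into the restriction of a projective resolution, as in \cite[Theorem 5.14]{miller-hlogy} for the complex case.
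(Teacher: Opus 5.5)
Your proposal is correct and follows essentially the same route as the paper. Both arguments use the resolutions $P_\bullet = (\Ind_G\Res_G)^{\bullet+1}\Res^G_H A$ and $Q_\bullet = (\Ind_H\Res_H)^{\bullet+1}A$, together with a chain map $f\colon P_\bullet \to \Res^G_H Q_\bullet$ (the paper cites \cite[Lemma 2.17]{miller-hlogy} for its existence and, as you plan, takes for granted that it computes $\mathbb L_n(\Omega)$). Both then reduce to the same two-square diagram: the left square commutes by naturality of $\nu$, and the right square commutes because both composites come from the inclusion $Q_n \to H\ltimes Q_n$.
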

\begin{proof}
Consider the ${\mathfrak I}^G$-projective resolution $P_\bullet \to \Res_H^G A$ in $\KK^G$ as in Proposition \ref{prop:derived-is-homology}, but also consider the analogous $Q_\bullet \to A$ in $\KK^H$. There exists a chain map $f \colon P_\bullet \to \Res^G_H Q_\bullet$ over $\Res^G_H A$ by \cite[Lemma 2.17]{miller-hlogy} (see also \cite[Proposition 3.26]{meyer-nest-2010}). 
Then consider the following diagram \eqref{eq:KO-diagram} of chain complexes and chain maps. For an $H$-module $N$ (in this case $N = \KO_i(Q_\bullet)$), recall the homomorphism $\mu_{N}\colon (\Res^G_H N)_G \to N_H$ introduced in the construction of $\Hh_*(\Omega; M)$ from  Proposition~\ref{prop-grouphomology-natural}.
{Also note that there is an obvious identification $\KO_i(\Res^G_H Q_\bullet)_G = (\Res^G_H \KO_i(Q_\bullet))_G$.}

\begin{equation}
\label{eq:KO-diagram}
\begin{tikzcd}[column sep = 1.5cm]
	{\KO_i(P_\bullet)_G} & 
    {\KO_i(\Res^G_H Q_\bullet)_G} & 
    {\KO_i(Q_\bullet)_H} \\
	{\KO_i(G \ltimes P_\bullet)} & 
    {\KO_i(G \ltimes \Res^G_H Q_\bullet)} & 
    {\KO_i(H \ltimes Q_\bullet)}
	\arrow["{\KO_i(f)_G}", from=1-1, to=1-2]
	\arrow["{\nu_{P_\bullet}}"', from=1-1, to=2-1]
	\arrow["{\mu_{\KO_i(Q_\bullet)}}", from=1-2, to=1-3]
	\arrow["{\nu_{Q_\bullet}}", from=1-3, to=2-3]
        \arrow["{\nu_{\Res^G_H Q_\bullet}}", from=1-2, to=2-2]
	\arrow["{\KO_i(\id_G \ltimes f)}", from=2-1, to=2-2]
	\arrow["{\omega_*}", from=2-2, to=2-3]
\end{tikzcd}\end{equation}

We observed in the proof of Proposition~\ref{prop:derived-is-homology} that  $\KO_i(P_\bullet)$ and $\KO_i(Q_\bullet)$ are acyclic resolutions of $\KO_i(A)$.  It now follows from {our discussion related to
Proposition~\ref{prop-grouphomology-natural}} that $\Hh_n(\Omega; \KO_i(A))\colon \Hh_n(G; \KO_i(A)) \to \Hh_n(H; \KO_i(A))$ is induced on homology by the top line of the diagram. As the vertical maps $\nu_*$ induce the isomorphisms $\Hh_n(G; \KO_i( - )) \to \mathbb L^{{\mathfrak I}^G}_n \KO_i(G \ltimes  - )$ by Proposition \ref{prop:derived-is-homology}, the outer diagram of \eqref{eq:KO-diagram} induces \eqref{eq:diagram} on the level of homology.  Thus, it suffices to show that the diagram \eqref{eq:KO-diagram} commutes. 
The left square commutes 
by naturality of $\nu$ with respect to morphisms in $\KK^G$. The right square also commutes 
as both compositions are ultimately induced by the inclusions $\Res^G_H Q_n \to H \ltimes Q_n$.
\end{proof}

\begin{rmk}
	In the case when $G \leq H$ and $\Omega \colon G \to H$ is the inclusion, an explicit formula for the chain map $f\colon P_\bullet \to \Res^G_H Q_\bullet$ in $\KK^G$ is given as follows. We have $P_n = (\Ind_G \Res_G)^{n+1}(\Res^G_H A) \cong C_0(G^{n+1},A)$ and $Q_n = \Res^G_H (\Ind_H \Res_H)^{n+1}(A) \cong C_0(H^{n+1},A)$; 
        \[ C_0(G^{n+1},A) \hookrightarrow C_0(H^{n+1},A) \]
    is induced by the inclusion $G^{n+1} \hookrightarrow H^{n+1}$.
        When $\Omega \colon G \to H$ is a non-injective homomorphism, one may still construct a morphism $f  \colon P_n  \to \Res^G_H Q_n$ in $\KK^G$ using the map of discrete spaces $G^{n+1} \to H^{n+1}$. Instead of an inclusion this induces a proper $\cs$-correspondence from $C_0(G^{n+1},A)$ to $C_0(H^{n+1},A)$ via a process referred to in \cite[Section 3]{PY25} as Atiyah transfer. Explicitly, $C_c(G^{n+1},A)$ is given the structure of a pre-Hilbert $C_0(H^{n+1},A)$-module with inner product
	\[ \langle \xi, \zeta \rangle (h_0, \ldots, h_n) = \sum_{(g_0, \ldots, g_n)\colon \Omega(g_i) = h_i} \xi(g_0, \ldots, g_n)^* \zeta(g_0, \ldots, g_n)\]
for $\xi, \zeta \in C_c(G^{n+1}, A)$, and the right module structure $\zeta \cdot \eta(g_0, \ldots, g_n) = \zeta(g_0, \ldots, g_n) \eta(\Omega(g_0), \ldots, \Omega(g_n))$ for $\zeta \in C_c(G^{n+1}, A)$ and $\eta \in C_0(H^{n+1}, A)$.
\end{rmk}

\section{Main Results}

\subsection{Real K-theory}

Combining the results of the previous sections, we summarize the main result for real $\Kk$-theory,  as follows. The analogous statement for complex $\cs$-algebras was proved in \cite{miller-hlogy}; see Remark \ref{rmk:main-for-complex} below.

 \begin{thm} ~ \label{KO-sp.seq.summary}
 \begin{enumerate}
  \item[\rm{(1)}]  
Let $G$ be a torsion-free countable discrete group which satisfies the Baum--Connes conjecture with coefficients in a separable real $G$-$\cs$-algebra $A$.  Then there exists a spectral sequence $E^r_{p,q} = E^r_{p,q}(G, A)$ satisfying
 \begin{itemize}
 \item $E^2_{p,q} \cong \Hh_p(G; \KO_q(A))$ and 
 \item $E^r_{p,q} \Longrightarrow \KO_{p+q}(G \ltimes_r A)$.
 \end{itemize}
 \item[\rm{(2)}]  
 Let $G$ be a torsion-free countable discrete group which satisfies the Baum--Connes conjecture with coefficients in separable real $G$-$\cs$-algebras $A$ and $B$. Let 
 $\alpha \in \KK^G(A, B)$. Then there is an induced map $\alpha_*$ on spectral sequences such that the diagram  
 \[\begin{tikzcd}
 E^2_{p,q}(G,A) \arrow[r, "\cong"] \arrow[d, "\alpha_*"] 
 	& \Hh_p(G; \KO_q(A)) \arrow[d, "{\Hh_p(G;\KO_q(\alpha))}"]  \\
 E^2_{p,q}(G,B) \arrow[r, "\cong"]
 	& \Hh_p(G; \KO_q(B)) 
 \end{tikzcd} \]
 commutes and $\alpha_*$ converges to $\KO_{p+q}(G\ltimes_r \alpha)$:
 \[ \begin{tikzcd}
 {E^r_{p,q}(G, A) } \arrow[r, Rightarrow] \arrow[d, "\alpha_* "] 
 	& {\KO_{p+q}(G \ltimes_r A)} \arrow[d, "\KO_{p+q}(G \ltimes_r \alpha) "]  \\
{E^r_{p,q}(G, B)}  \arrow[r, Rightarrow] 
 	& {\KO_{p+q}(G \ltimes_r B) }
\end{tikzcd} \]
Moreover, the assignment $\alpha \mapsto \alpha_*$ is functorial. 
   \item[\rm{(3)}]  
 Let $\Omega \colon G \rightarrow H$ be a homomorphism of torsion-free countable discrete groups which satisfy the Baum--Connes conjecture with coefficients in a separable real $H$-$\cs$-algebra $A$ (viewing $A$ as a $G$-$\cs$-algebra via $g \cdot a := \Omega(g) \cdot a$). Then there is an induced morphism  $\ABC(\Omega)_*$  of spectral sequences such that the diagram  
\[\begin{tikzcd}
 E^2_{p,q}(G,A) \arrow[r, "\cong"] \arrow[d, "\ABC(\Omega)_*"] 
 	& \Hh_p(G; \KO_q(A)) \arrow[d, "{\Hh_p(\Omega; \KO_q(A))}"]  \\
 E^2_{p,q}(H,A) \arrow[r, "\cong"]
 	& \Hh_p(H; \KO_q(A)) 
 \end{tikzcd} \]
 commutes and $\ABC(\Omega)_*$ converges to $\KO^{\mathrm{top}}_{p+q}(\Omega;A)$:
  \[\begin{tikzcd}
 {E^r_{p,q}(G, A) } \arrow[r, Rightarrow] \arrow[d, "\ABC(\Omega)_*"] 
 	& {\KO_{p+q}(G \ltimes_r A)} \arrow[d, "{\KO^{\mathrm{top}}_{p+q}(\Omega;A)}"]  \\
{E^r_{p,q}(H, A)}  \arrow[r, Rightarrow] 
 	& {\KO_{p+q}(H \ltimes_r A) }
 \end{tikzcd} \]
 The map $\KO^{\mathrm{top}}_{p+q}(\Omega;A)$ is induced by $\Omega$ on the level of topological K-theory, which is identified with the K-theory of the reduced crossed product via the Baum--Connes assembly map.
 In particular, if $\ker \Omega$ is amenable, $\KO^{\mathrm{top}}_{p+q}(\Omega;A)$ is induced by the  $*$-homomorphism $\Omega \ltimes_r \id_A \colon G \ltimes_r A \to H \ltimes_r A$.
 
  \item[\rm{(4)}]  Let $\Omega \colon G \rightarrow H$ be a homomorphism of torsion-free countable discrete groups which satisfy the Baum--Connes conjecture with coefficients in separable real $H$-$\cs$-algebras $A$ and $B$, and let $\alpha \in \KK^H(A, B)$ where $A,B$ are separable real $H$-$\cs$-algebras. Then the following diagram of spectral sequences commutes.
\[ \begin{tikzcd}
 {E^r_{p,q}(G, A) } \arrow[r, "\alpha_* "] \arrow[d, "\ABC(\Omega)_*"] 
 	& {E^r_{p,q}(G, B) } \arrow[d, "\ABC(\Omega)_*"]  \\
{E^r_{p,q}(H, A)}  \arrow[r, "\alpha_* "] 
 	& {E^r_{p,q}(H, B)  }
 \end{tikzcd} \]
 \end{enumerate}
 \end{thm}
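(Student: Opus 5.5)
The plan is to assemble the four parts from Proposition \ref{prop:ABC-spectral}, Proposition \ref{prop:derived-is-homology}, Theorem \ref{group homology theorem}, Proposition \ref{prop:amenable-kernel}, and the general machinery of \cite[Theorem 6.1]{miller-hlogy}.

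For (1), take the ABC spectral sequence of the tuple $(\KK^G,\mathfrak I^G,\KO_*(G\ltimes\Input),A)$. Proposition \ref{prop:ABC-spectral} gives convergence to $\KO_*(G\ltimes_r A)$ under the torsion-freeness and Baum--Connes hypotheses. Proposition \ref{prop:derived-is-homology} then identifies $E^2_{p,q}=\mathbb L_p^{\mathfrak I^G}\KO_q(G\ltimes A)$ with $\Hh_p(G;\KO_q(A))$.

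For (2), use the naturality statement in Proposition \ref{prop:ABC-spectral}. This says that $\alpha\in\KK^G(A,B)$ induces a morphism of spectral sequences converging to $\KO_*(G\ltimes_r\alpha)$. Functoriality of $\alpha\mapsto\alpha_*$ comes from \cite[Theorem 6.1]{miller-hlogy}. Commutativity of the $E^2$ square is exactly the naturality clause of Proposition \ref{prop:derived-is-homology}.

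For (3), apply \cite[Theorem 6.1]{miller-hlogy} to the ABC morphism $\ABC(\Omega)$ of \eqref{ABC morphism}. This yields a morphism of spectral sequences that is $\mathbb L_n(\Omega)$ on $E^2$, and Theorem \ref{group homology theorem} identifies this with $\Hh_p(\Omega;\KO_q(A))$. The same theorem says the morphism converges to the induced map $\mathbb L(\Omega)$ of localizations. Transporting $\mathbb L(\Omega)$ through the isomorphisms $\mu\circ\kappa$ from the proof of Proposition \ref{prop:ABC-spectral} gives the definition of $\KO^{\mathrm{top}}(\Omega;A)$.

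The main obstacle is the amenable-kernel clause of (3): showing that this transported map equals $\KO_*(\Omega\ltimes_r\id_A)$. I would first check that the square
\[\mathbb L\KO_*(G\ltimes A)\to\KO_*(G\ltimes A)\to\KO_*(G\ltimes_r A)\]
commutes with its $H$-analogue. On the unreduced level this holds because localization maps are natural for ABC morphisms, in the sense of \cite[Definition 5.10]{miller-hlogy}, and $\omega_*$ is induced by $\Omega\ltimes\id$. Proposition \ref{prop:amenable-kernel} then shows that $\Omega\ltimes\id$ descends to reduced crossed products, compatibly with the quotient maps $G\ltimes\Input\to G\ltimes_r\Input$. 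It remains to see that the composite $\mathbb L\KO_*(G\ltimes A)\to\KO_*(G\ltimes_r A)$ equals $\mu\circ\kappa$. This holds because $\kappa$ is induced by the full-to-reduced natural transformation and $\mu$ is the reduced localization map.

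For (4), use functoriality in \cite[Theorem 6.1]{miller-hlogy}. Both composites are induced by composites of ABC morphisms: $\ABC(\Omega)$ followed by $\alpha$, and $\Res^G_H\alpha$ followed by $\ABC(\Omega)$. These composites agree because $\omega_*$ is a natural transformation in $B\in\KK^H$, so the squares of spectral sequences coincide.
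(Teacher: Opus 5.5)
Your proposal is correct and follows essentially the same route as the paper. Parts (1) and (2) come from Propositions~\ref{prop:ABC-spectral} and~\ref{prop:derived-is-homology}, part (3) from \cite[Theorem 6.1]{miller-hlogy} applied to $\ABC(\Omega)$ together with Theorem~\ref{group homology theorem}, and part (4) from functoriality of ABC morphisms. For the amenable-kernel clause you use the same two-square diagram as the paper (naturality of the localization map plus Proposition~\ref{prop:amenable-kernel}); your check that the full-to-reduced composite equals $\mu\circ\kappa$ just makes explicit what the paper asserts when it calls the horizontal composites the reduced assembly maps.
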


 \begin{proof} 
 Statement (1) follows directly from Propositions \ref{prop:ABC-spectral} and \ref{prop:derived-is-homology}. 
Statement (2) describes the naturality of (1) and follows directly from the naturality within Propositions \ref{prop:ABC-spectral} and \ref{prop:derived-is-homology}.

For (3), we apply Theorem \ref{thm:group-naturality} to obtain the commutative diagram, also using that $E^2_{p,q} = \mathbb L_p^{\mathfrak I^G}(\KO_q(G \ltimes A))$ from Proposition \ref{prop:ABC-spectral}. We define $\KO^{\mathrm{top}}_{p+q}(\Omega;A)$ to be the map $\mathbb L(\Omega) \colon \mathbb L^{\mathfrak I^G} \KO_*(G \ltimes A) \to \mathbb L^{\mathfrak I^H} \KO_*(H \ltimes A)$ induced by $\ABC(\Omega)$, identifying these localizations with topological K-theory as in the proof of Proposition \ref{prop:ABC-spectral}. That $\ABC(\Omega)_*$ converges to $\KO^{\mathrm{top}}_{p+q}(\Omega;A)$ now follows from the functoriality of the ABC spectral sequence \cite[Theorem 6.1]{miller-hlogy}.
 For the `in particular' statement, note the construction of $\Omega \ltimes_r \id_A$ from Proposition \ref{prop:amenable-kernel}. Consider the following diagram. \[\begin{tikzcd}
 \KO^{\mathrm{top}}_{p+q}(G;A) \arrow[r] \arrow[d, "{\KO^{\mathrm{top}}_{p+q}(\Omega;A)}"] & \KO_{p+q}(G \ltimes A) \arrow[r] \arrow[d, "{\KO_{p+q}(\Omega \ltimes \id_A)}"] 
 	& \KO_{p+q}(G \ltimes_r A) \arrow[d, "{\KO_{p+q}(\Omega \ltimes_r \id_A)}"]  \\
 \KO^{\mathrm{top}}_{p+q}(H;A) \arrow[r] & \KO_{p+q}(H \ltimes A) \arrow[r]
 	& \KO_{p+q}(H \ltimes_r A) 
 \end{tikzcd} \]
 The first square commutes by naturality of the localization map with respect to $\ABC(\Omega)$ (see \cite[Definition 5.10]{miller-hlogy}), and the second square commutes by construction. The compositions of the horizontal maps are the reduced assembly maps, which thereby identify $\KO^{\mathrm{top}}_{p+q}(\Omega;A)$ with $\KO_{p+q}(\Omega \ltimes_r \id_A)$.
  Finally, Statement (4) is a consequence of the functoriality of the ABC spectral sequence, that an ABC morphism functorially induces a morphism of ABC spectral sequences \cite[Theorem 6.1]{miller-hlogy}.
 \end{proof}

 	\begin{rmk}
 		\label{rmk:main-for-complex}
 	{The analogue of this result for complex $\cs$-algebras also holds. 
    This can be obtained directly from Theorem~\ref{KO-sp.seq.summary} by noting that when $A$ is a complex $\cs$-algebra, we have
    $\KO_*(A) = \Kk_*(A),$
where the left side involves forgetting the complex structure of $A$.
}

        We can also extract the result for complex $\cs$-algebras from \cite{miller-hlogy} as follows.
 	Consider a homomorphism $\Omega \colon G \to H$ of countable torsion-free discrete groups satisfying the Baum--Connes conjecture with coefficients, and a complex $\cs$-algebra $A$ with an action of $H$. By combining \cite[Example 2.12]{miller-hlogy},
 	\cite[Example 5.8]{miller-hlogy}  and \cite[Example 3.2]{miller-categories}, we obtain the ABC morphism 
    \[ \ABC_{\mathbb C}(\Omega) \colon (\KK^G_{\mathbb C}, \mathfrak I^G_{\mathbb C}, \Kk_*(G \ltimes -), A) \to (\KK^H_{\mathbb C}, \mathfrak I^H_{\mathbb C}, \Kk_*(H \ltimes -), A)\] 
    analogous to \eqref{ABC morphism}. By \cite[Theorem 6.1]{miller-hlogy} this induces a morphism of the associated ABC spectral sequences, given on the $E^2$ page by the map of derived functors $\mathbb L_n(\Omega) = \mathbb L_n(\ABC_{\mathbb C}(\Omega))$ and converging to the map of target groups given by the localization map $\mathbb L(\Omega) = \mathbb L(\ABC_{\mathbb C}(\Omega))$. By \cite[Example 2.20 and Example 5.11]{miller-hlogy}, $\mathbb L(\Omega)$ identifies with a map $\Kk_*(G \ltimes_r A) \to \Kk_*(H \ltimes_r A)$, and \cite[Theorem 5.14]{miller-hlogy} identifies $\mathbb L_n(\Omega)$ with the map $$\Hh_n(\Omega;\Kk_*(A)) \colon \Hh_n(G; \Kk_*(A)) \to \Hh_n(H; \Kk_*(A))$$ in group homology induced by $\Omega$.
 	\end{rmk}

\subsection{United K-theory}\label{subsection-unitedK}

United $\Kk$-theory is a functor on $\KK$ that includes real $\Kk$-theory $\KO_*(A)$, together with the so-called complex and self-conjugate $\Kk$-theory, as well as natural transformations between them. It was introduced in \cite{bousfield90} in the context of topological $\Kk$-theory and in \cite{boersema2002} for real $\cs$-algebras. 
One advantage of united $\Kk$-theory is that it is a more complete invariant. For example, $\KO_*(A)$ by itself does not classify real $\cs$-algebras in the real bootstrap category up to $\KK$-equivalence, but united $\Kk$-theory does so by \cite{boersema2004}. 
United $\Kk$-theory also completely classifies up to isomorphism real Kirchberg algebras in the real bootstrap category (either unital or stable) by \cite{brs}.
Furthermore, it is generally easier to consider real and complex $\Kk$-theory together as part of united $\Kk$-theory because the additional structure serves to facilitate computation (cf.~\cite{boersema-vdovina} and \cite[Section 4]{boersema-gillaspy}). For these reasons, we will articulate our main theorem in terms of united $\Kk$-theory, below.

First we give a brief summary of united $\Kk$-theory in the context of real $\cs$-algebras, but see the references above for a full development.
For a real $\cs$-algebra $A$, ``United $\Kk$-theory" refers to one of the following functors: \begin{align*}
	\Kk\crr(A) &= \{ \KO_*(A) , \KU_*(A), \Theta\crr \}  \\
	\text{or} \quad \Kk\crt(A) &= \{ \KO_*(A) , \KU_*(A), \KT_*(A), \Theta\crt \} 
\end{align*}
where
\begin{align*}
	\KO_*(A) &= \text{the standard operator algebra period-8 $\Kk$-theory of $A$,} \\
	\KU_*(A) &= \Kk_*(A\sc) = \text{the standard complex operator algebra period-2 $\Kk$-theory of $A\sc$}, \\
	\KT_*(A) &= \Kk_*(A \otimes T) \text{~where $T = \{f \colon [0,1] \rightarrow \C \text{ continuous } \mid f(0) = \overline{f(1)} \} $ }, \\
	\Theta\crr &= \text{the natural transformations among $\KO_*$ and $\KU_*$ 
    }, \\	
	\Theta\crt &=  \text{the natural transformations among $\KO_*$, $\KU_*$, and $\KT_*$} . 
\end{align*}

The third group $\KT_*(A)$, known as `self-conjugate $\Kk$-theory', was introduced in \cite{atiyah} in the topological setting and defined as above in \cite{boersema2002} for real $\cs$-algebras.

The full version $\Kk\crt(A)$ of united $\Kk$-theory is necessary for the most general versions of the K\"unneth Formula and the Universal Coefficient Theorem in \cite{boersema2002} and \cite{boersema2004}.  However, the smaller version $\Kk\crr(A)$ of united $\Kk$-theory  is sufficient for many purposes since the forgetful functor $\Kk\crt(A) \mapsto \Kk\crr(A)$ is injective on isomorphism classes by the main result of \cite{Hewitt}.

The natural transformations $\Theta\crt$ arise from taking the Kasparov product with the elements of $\KK_i(X,Y)$ for $X,Y \in \{\R,\C,T\}$ and $i\in \mathbb Z$. %
They satisfy certain relations $\mathrm{Rel}\crt$ due to bilinearity and associativity of the Kasparov product, which say that they are compatible with addition in $\KK_i(X,Y)$ and the Kasparov product $\otimes \colon \KK_i(X,Y) \times \KK_j(Y,Z) \to \KK_{i+j}(X,Z)$. Abstractly, the united K-theory $\Kk\crt(A)$ of a real $\cs$-algebra $A$ has the structure of a CRT-module:

\begin{defn}
A \emph{$\CRT$-module} $M = (M\po, M\pu, M\pt, \Theta\crt_M)$ consists of 
\begin{itemize}
\item a triple $(M\po, M\pu, M\pt)$ of $\mathbb Z$-graded abelian groups, together with 
\item a collection 
\[\Theta\crt_M = \{ M_\theta \mid i \in \mathbb Z, \, X,Y \in \{\R,\C,T\}, \, \theta \in \KK_i(X,Y) \}\] of homomorphisms $M_{\theta} \colon M\px_* \to M\py_{*+i}$ of degree $i$, compatible with addition in the Kasparov groups and with the Kasparov product. Explicitly, we require that $M_{\theta_1 + \theta_2} = M_{\theta_1} + M_{\theta_2}$ for $\theta_1,\theta_2 \in \KK_i(X,Y)$ and $M_{\theta_3 \otimes \theta_4} =  M_{\theta_4} \circ M_{\theta_3}$ for $\theta_3 \in \KK_i(X,Y)$ and $\theta_4 \in \KK_j({Y},Z)$.
\end{itemize}
A {\em morphism of $\CRT$-modules} is a triple of graded homomorphisms compatible with the natural transformations $M_\theta$. 
\end{defn}

The natural transformations $\theta \in \KK_i(X,Y)$ in $\Theta\crt$ and 
their relations $\mathrm{Rel}\crt$ are described in detail in \cite{boersema2002} and \cite{bousfield90}.

Our spectral sequence(s) which converge to the CRT-module $\Kk\crt(G \ltimes_r A)$ also have extra CRT-module-like structure, which we encode in the following definition.

\begin{defn}
A \emph{$\CRT$-spectral sequence} $$(E^r_{p,q})\crt = ((E^r_{p,q})\po, (E^r_{p,q})\pu, (E^r_{p,q})\pt, \Theta\crt_E)$$ consists of 
\begin{itemize}
\item a triple $((E^r_{p,q})\po, (E^r_{p,q})\pu, (E^r_{p,q})\pt)$ of spectral sequences, together with
\item a collection \[\Theta\crt_E = \{ f_\theta \mid i \in \mathbb Z, \, X,Y \in \{\R,\C,T\}, \, \theta \in \KK_i(X,Y) \}\] of morphisms $f_\theta \colon (E^r_{*,*})\px \to (E^r_{*,*+i})\py$ of spectral sequences of degree $(0,i)$, compatible with addition in the Kasparov groups and with the Kasparov product. 
\end{itemize}
A {\em morphism of $\CRT$-spectral sequences} is a triple of morphisms of spectral sequences compatible with the natural transformations $f_\theta$.
\end{defn}
In particular, each column $((E^r_{p,*})\po, (E^r_{p,*})\pu, (E^r_{p,*})\pt, \Theta\crt_E)$ of a CRT-spectral sequence is a CRT-module,
and each differential 
$$d^r \colon (E^r_{p,*})\px \rightarrow (E^r_{p-r,*+r-1})\px$$
is a morphism of CRT-modules.

We are now prepared to state and prove our main result.
  
  \begin{thm} ~ \label{CRT-sp.seq.summary}
 \begin{enumerate}
 \item[\rm{(1)}]  
Let $G$ be a torsion-free countable discrete group which satisfies the Baum--Connes conjecture with coefficients in a separable real $G$-$\cs$-algebra $A$.  Then there exists a~$\CRT$-spectral sequence $(E^r_{p,q})\crt = (E^r_{p,q})\crt(G, A)$ satisfying
 \begin{itemize}
 \item $(E^2_{p,q})\crt \cong \Hh_p(G; \Kk\crt_q(A))$ and
 \item $(E^r_{p,q})\crt \Longrightarrow \Kk\crt_{p+q}(G \ltimes_r A)$.
 \end{itemize}
 \item[\rm{(2)}]
 Let $G$ be a torsion-free countable discrete group which satisfies the Baum--Connes conjecture with coefficients in separable real $G$-$\cs$-algebras $A$ and $B$. Let $\alpha \in \KK^G(A, B)$. Then there is an induced morphism $\alpha_*$ of $\CRT$-spectral sequences
 such that the diagram
 \[ \begin{tikzcd}
 {(E^r_{p,q})\crt(G, A) } \arrow[r, "\cong"] \arrow[d, "\alpha_*"] 
 	& \Hh_p(G; \Kk\crt_q(A)) \arrow[d, "{\Hh_p(G;\Kk\crt_q(\alpha))}"]  \\
 {(E^r_{p,q})\crt(G, B) } \arrow[r, "\cong"]
 	& \Hh_p(G; \Kk\crt_q(B)) \\
 \end{tikzcd} \]
commutes and $\alpha_*$ converges to $\Kk\crt_{p+q}(G \ltimes_r \alpha) $:
\[ \begin{tikzcd}
 {(E^r_{p,q})\crt(G, A) } \arrow[r, Rightarrow] \arrow[d, "\alpha_* "] 
 	& {\Kk\crt_{p+q}(G \ltimes_r A)} \arrow[d, "\Kk\crt_{p+q}(G \ltimes_r \alpha) "]  \\
{(E^r_{p,q})\crt(G, B)}  \arrow[r, Rightarrow] 
 	& {\Kk\crt_{p+q}(G \ltimes_r B) }
\end{tikzcd} \]
  \item[\rm{(3)}]
 Let $\Omega \colon G \rightarrow H$ be a homomorphism of torsion-free countable discrete groups which satisfy the Baum--Connes conjecture with coefficients in a separable real $H$-$\cs$-algebra $A$ (viewing $A$ as a $G$-$\cs$-algebra via $g \cdot a := \Omega(g) \cdot a$). Then there is an induced morphism $\mathrm{ABC}(\Omega)_*$ of $\CRT$-spectral sequences such that the diagram
 \[ \begin{tikzcd}
 (E^2_{p,q})\crt(G,A) \arrow[r, "\cong"] \arrow[d, "\mathrm{ABC}(\Omega)_*"] 
 	& \Hh_p(G; \Kk\crt_q(A)) \arrow[d, "{\Hh_p(\Omega; \Kk\crt_q(A))}"]  \\
 (E^2_{p,q})\crt(H,A) \arrow[r, "\cong"]
 	& \Hh_p(H; \Kk\crt_q(A)) \\
 \end{tikzcd} \]
 commutes and $\ABC(\Omega)_*$ converges to $\Kk\crt_{p+q}(\Omega; A)$: 
   \[ \begin{tikzcd}
 {(E^r_{p,q})\crt(G, A) } \arrow[r, Rightarrow] \arrow[d, "\mathrm{ABC}(\Omega)_*"] 
 	& {\Kk\crt_{p+q}(G \ltimes_r A)} \arrow[d, "{\Kk^{\scriptscriptstyle{\mathrm{top}, \CRT}}_{p+q}(\Omega;A)}"]  \\
{(E^r_{p,q})\crt(H, A)}  \arrow[r, Rightarrow] 
 	& {\Kk\crt_{p+q}(H \ltimes_r A) }
 \end{tikzcd} \]
 The map $\Kk^{\scriptscriptstyle{\mathrm{top}, \CRT}}_{p+q}(\Omega;A)$ is induced by $\Omega$ on the level of topological K-theory, which is identified with the K-theory of the reduced crossed product via the Baum--Connes assembly map.
 In particular, if $\ker \Omega$ is amenable,
 $\Kk^{\scriptscriptstyle{\mathrm{top}, \CRT}}_{p+q}(\Omega;A)$ is induced by the resulting $*$-homomorphism $\Omega \ltimes_r \id_A \colon G \ltimes_r A \to H \ltimes_r A$.    \item[\rm{(4)}]  Let $\Omega \colon G \rightarrow H$ be a homomorphism of torsion-free countable discrete groups which satisfy the Baum--Connes conjecture with coefficients in separable real $H$-$\cs$-algebras $A$ and $B$, and let $\alpha \in \KK^H(A, B)$. 
 Then the following diagram of $\CRT$-spectral sequences commutes.
\[ \begin{tikzcd}
 {(E^r_{p,q})\crt(G, A) } \arrow[r, "\alpha_* "] \arrow[d, "\ABC(\Omega)_*"] 
 	& {(E^r_{p,q})\crt(G, B) } \arrow[d, "\ABC(\Omega)_*"]  \\
{(E^r_{p,q})\crt(H, A)}  \arrow[r, "\alpha_* "] 
 	& {(E^r_{p,q})\crt(H, B)  }
 \end{tikzcd} \]
 \end{enumerate}
 \end{thm}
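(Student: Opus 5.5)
The plan is to reduce everything to Theorem \ref{KO-sp.seq.summary}. The three components of united $\Kk$-theory can all be expressed as real $\Kk$-theory of a tensor product. Indeed $\KU_*(A) = \KO_*(A \otimes \C)$ and $\KT_*(A) = \KO_*(A \otimes T)$, with $\C$ and $T$ viewed as real $\cs$-algebras carrying the trivial $G$-action. Moreover $G \ltimes_r (A \otimes X) \cong (G \ltimes_r A) \otimes X$ for $X \in \{\R,\C,T\}$, and likewise for full crossed products, since $X$ is nuclear with trivial action.

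\textbf{Step 1: the three spectral sequences.} For $X \in \{\R, \C, T\}$ we define $(E^r_{p,q})\px(G,A) := E^r_{p,q}(G, A \otimes X)$, the spectral sequence of Theorem \ref{KO-sp.seq.summary}(1). We must check that $G$ satisfies Baum--Connes with coefficients in $A \otimes X$. For $X = \C$ this is the equivalence between real and complex assembly recalled in the preliminaries. For $X = T$ we use that $T$ lies in the real bootstrap class and is $\KK$-equivalent to a suspension-type combination of $\R$ and $\C$. Concretely, the extension $0 \to C_0\pr(\R)\otimes\C \to T \to \R \to 0$ and the five lemma applied to the natural assembly maps give the isomorphism. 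By Theorem \ref{KO-sp.seq.summary}(1), the $E^2$ terms are then $\Hh_p(G;\KO_q(A\otimes X)) = \Hh_p(G;\Kk\px_q(A))$, and the sequences converge to $\KO_{p+q}((G\ltimes_r A)\otimes X) = \Kk\px_{p+q}(G\ltimes_r A)$.

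\textbf{Step 2: the CRT structure.} Each $\theta \in \KK_i(X,Y)$ gives $\id_A \otimes \theta \in \KK^G_i(A\otimes X, A\otimes Y)$, where the trivial action on $X,Y$ makes exterior product with $\theta$ $G$-equivariant. We realize degree $i$ by replacing $Y$ with $C_0\pr(\R^{8k-i})\otimes Y$ and using stability of the ABC spectral sequence under suspension. Theorem \ref{KO-sp.seq.summary}(2) then yields morphisms $f_\theta$ of spectral sequences. On $E^2$ these are $\Hh_p(G;\theta_*)$, and they converge to $\theta$ acting on $\Kk\crt(G\ltimes_r A)$, where we use that descent commutes with exterior product by a trivially-acted element. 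The relations $\mathrm{Rel}\crt$ follow from functoriality of $\alpha \mapsto \alpha_*$ in (2), since Kasparov products and sums of the $\id_A\otimes\theta$ correspond to composites and sums. Additivity of $\alpha\mapsto\alpha_*$ needs one remark: the morphism induced on $E^r$ by a chain map of resolutions is additive for $r=2$, and hence for all $r$ by naturality in \cite[Theorem 6.1]{miller-hlogy}. I expect this degree and suspension bookkeeping, together with additivity at higher pages, to be the main technical nuisance rather than a conceptual obstacle.

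\textbf{Step 3: parts (2)--(4).} For $\alpha\in\KK^G(A,B)$ we take the triple $(\alpha\otimes\id_X)_*$ from Theorem \ref{KO-sp.seq.summary}(2). For $\Omega$ we take the triple $\ABC(\Omega)_*$ for coefficients $A\otimes X$ from Theorem \ref{KO-sp.seq.summary}(3), noting that $\Res^G_H(A\otimes X) = (\Res^G_H A)\otimes X$. Compatibility with the $f_\theta$ is exactly Theorem \ref{KO-sp.seq.summary}(4) applied to $\id_A\otimes\theta \in \KK^H$. The descriptions on the $E^2$ page and on the limit are read off componentwise. In the amenable-kernel case we additionally use $(\Omega\ltimes_r\id_{A\otimes X}) = (\Omega\ltimes_r\id_A)\otimes\id_X$ under the identifications above. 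Part (4) follows from Theorem \ref{KO-sp.seq.summary}(4) applied to $\alpha\otimes\id_X$ in each component.
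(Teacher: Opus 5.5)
Your proposal follows the paper's proof: apply Theorem~\ref{KO-sp.seq.summary} to $A\otimes X$ for $X\in\{\R,\C,T\}$, build the operations $f_\theta$ from $\id_A\otimes\theta$ via part (2), and get compatibility of $\ABC(\Omega)_*$ with them from part (4). You are in fact more careful than the paper, which never checks Baum--Connes with coefficients in $A\otimes X$ and says nothing about degree shifts or additivity.

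One slip needs fixing. There is no extension $0\to C_0(\R)\otimes\C\to T\to\R\to 0$: since $\KO_2(T)=0$, its long exact sequence would force $\KO_2(\R)=\Z/2$ to inject into $\KO_1(C_0(\R)\otimes\C)\cong\Z$. Evaluation at $0$ instead gives the semi-split extension $0\to C_0(\R)\otimes\C\to T\to\C\to 0$, and your five-lemma argument goes through unchanged with it.
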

 
 \noindent
  
 \begin{proof}
 Let $A$ be a separable real $\cs$-algebra with a $G$-action. Then $A\sc = A \otimes \C$ and $A \otimes T$ also have an inherited $G$-action.  Furthermore, there are isomorphisms $G \ltimes_r (A \otimes \C) \cong (G \ltimes_r A) \otimes \C$ and
  $G \ltimes_r (A \otimes T) \cong (G \ltimes_r A) \otimes T$.
  Therefore, applying Part (1) of Theorem~\ref{KO-sp.seq.summary} 
  to all three of these algebras, we obtain three spectral sequences:
  \begin{align*} 
  (E^r_{p,q})\po &= E^r_{p,q}(G, A)  \numberthis \label{3bears} \\
 (E^r_{p,q})\pu &= E^r_{p,q}(G, A \otimes \C) \\
 (E^r_{p,q})\pt &= E^r_{p,q}(G, A \otimes T)
  \end{align*}
  We will write these as  $(E^r_{p,q})\px = E^r_{p,q}(G, A \otimes X)$ for $X \in \{O, U, T\} = \{\R, \C, T\}$. In each case, the $E^2$ page is given by
  $$(E^2_{p,q})\px = \Hh_p(G; \KO_q(A \otimes X)) 
  	= \Hh_p(G; \KX_q(A))\; $$
and the spectral sequence converges according to the rule
$$(E^r_{p,q})\px \Longrightarrow \KO_{p+q}(G \ltimes_r (A \otimes X))
	= \KX_{p+q}(G \ltimes_r A) \; .$$

Furthermore, suppose that a natural transformation $\theta \in \Theta\crt$ is given by $\theta \in \KK_i(X, Y)$ for $X,Y \in \{\R, \C, T\}$.
Then for any separable real $G$-$\cs$-algebra $A$, $\theta$ functorially induces an element $\theta_A \in \KK_i^G(A \otimes X, A \otimes Y)$. 
By Part (2) of Theorem~\ref{KO-sp.seq.summary}, $\theta_A$ in turn functorially induces a morphism $f_{\theta_A}$ of spectral sequences of degree $(0,i)$
$$(E^r_{p,q})\px(G, A) \rightarrow (E^r_{p,q+i})\py(G,A) \; $$
converging to the map 
$$(\theta_A)_*\colon \KX_*(G \ltimes_r A) \rightarrow \KY_{*+i}(G \ltimes_r A) .$$

Therefore, the three spectral sequences $(E^r_{p,q})\px$ 
comprise a single $\CRT$-spectral sequence
$$(E^r_{p,q})\crt(G,A) = ( (E^r_{p,q})\po(G,A), (E^r_{p,q})\pu(G,A), (E^r_{p,q})\pt(G,A) , \Theta\crt({G, A})) ,$$
where $\Theta\crt({G, A})$ consists of the maps 
$f_{\theta_A}$ described above for each 
$\theta \in \Theta\crt$. 
This $\CRT$-spectral sequence satisfies
\begin{itemize}
 \item $(E^2_{p,q})\crt(G,A) \cong \Hh_p(G; \Kk\crt_q(A))$ and 
 \item $(E^r_{p,q})\crt(G,A)  \Longrightarrow \Kk\crt_{p+q}(G \ltimes_r A)$
 \end{itemize}
because these statements hold for each of the component spectral sequences by Theorem \ref{KO-sp.seq.summary}(1) and the elements of $\Theta\crt(G, A)$ are all induced from elements of $\KK_i(X,Y)$.   This proves  Part (1) of Theorem~\ref{CRT-sp.seq.summary}.
 
 Let $\alpha \in \KK^G(A,B)$. By applying Part (2) of Theorem~\ref{KO-sp.seq.summary} for $X \in \{\R, \C, T\}$, we get maps $\alpha_*\px$ from each of the three spectral sequences $(E^r_{p,q}(G,A))\px$ to the corresponding spectral sequence $(E^r_{p,q}(G,B))\px$.
Then the triple
$$\alpha_* = \{ \alpha_*\po, \alpha_*\pu, \alpha_*\pt\} \colon (E^r_{p,q})\crt(G,A) \rightarrow (E^r_{p,q})\crt(G,B)$$
is a morphism of CRT-spectral sequences.
That the constituent elements of $\alpha_*$ commute appropriately with all of the natural transformations $\theta \in \Theta\crt$ can be seen from the fact that the diagram
\[ \begin{tikzcd}
 {\KO_*(A \otimes X) } \ar[d] \arrow[r] & {\KO_*(B \otimes X) }  \ar[d] \\
 {\KO_*(A \otimes Y) } \arrow[r] & {\KO_*(B \otimes Y) } 
 \end{tikzcd} \]
commutes, in which the horizontal maps are induced by $\alpha \in \KK^G(A,B)$ and the vertical maps are induced by $\theta \in \KK(X,Y)$.

 Theorem \ref{KO-sp.seq.summary}(2) then implies that the diagrams in Part (2) of Theorem \ref{CRT-sp.seq.summary} commute, as desired.

For Part (3), let $\Omega \colon G \rightarrow H$ be a homomorphism of torsion-free countable discrete groups which satisfy the Baum--Connes conjecture, and let $A$ be a separable real $H$-$\cs$-algebra. Then Part (3) of Theorem \ref{KO-sp.seq.summary} guarantees morphisms of  spectral sequences 
$$\ABC(\Omega)_*\px \colon (E^r_{p,q})\px(G,A) \to  (E^r_{p,q})\px(H,A)$$
for $X \in \{\R, \C, T\}$. Furthermore, for a natural transformation $\theta \in \Theta\crt$ given by
 $\theta \in \KK_i(X, Y)$ for $X,Y \in \{\R, \C, T\}$, we have a commutative diagram
\[ \begin{tikzcd}
 {E^r_{p,q}(G, A \otimes X) } \arrow[rr, "(\id_A \otimes \theta)_* "] \arrow[d, "\ABC(\Omega)_*\px"] 
 	&& {E^r_{p,q}(G, A \otimes Y) } \arrow[d, "\ABC(\Omega)_*\py "]  \\
{E^r_{p,q}(H, A \otimes X)}  \arrow[rr, "(\id_A \otimes \theta)_* "] 
 	&& {E^r_{p,q}(H, A \otimes Y)  }
 \end{tikzcd} \]
by Part (4) of Theorem~\ref{KO-sp.seq.summary}. This implies that 
$$\ABC(\Omega)_* = \{ \ABC(\Omega)_*\po, \ABC(\Omega)_*\pu, \ABC(\Omega)_*\pt \}$$ is a morphism of ${\CRT}$-spectral sequences as desired.  The commutativity of the first diagram of Part (3) follows since the three component diagrams commute by Theorem \ref{KO-sp.seq.summary}(3).

Moreover, $\ABC(\Omega)_*$  converges to
$$\Kk^{\scriptscriptstyle{\mathrm{top}, \CRT}}_{p+q}(\Omega;A)_* \colon \Kk\crt_*(G \ltimes_r A) \rightarrow \Kk\crt_*(H \ltimes_r A)$$ since each of the component spectral sequence morphisms $\ABC(\Omega)_*\px$ converges to $\KO^{\scriptscriptstyle{\mathrm{top}}}_{p+q}(\Omega;A \otimes X)$ by Theorem \ref{KO-sp.seq.summary}(3).  The description of $\ABC(\Omega)_*$ in terms of $\Kk^{\mathrm{top}, \CRT}_{p+q}(\Omega; A)$ and the `in particular' statement follow as in the proof of Theorem \ref{KO-sp.seq.summary}.

For Part (4), the existence  of  
$\alpha_*$ and $\ABC(\Omega)_*$ follows from Parts (2) and (3), and the commutativity of the diagram follows from Part (4) of Theorem~\ref{KO-sp.seq.summary}.
 \end{proof}
 
 \appendix
 \section{K-theory for $\boldsymbol{\Z^k}$ crossed products via spectral sequences: a historical overview}
 \label{sec:appendix}

 For crossed product $\cs$-algebras, two crucial tools for computing $\Kk$-theory are the well-known Pimsner--Voiculescu long exact sequence for crossed products by $\Z$ (see \cite{pims-voic, boersema-rmj}) and a spectral sequence for crossed products by $\Z^k$, which originates from \cite[Theorem 6.1]{kasparov-equivKK} and  comes in both a homological version and a cohomological version. For more on this spectral sequence and its applications, see also \cite{robertson-steger-2001, Barlak15, Barlak-Omland-Stammeier, deeley, boersema-vdovina, Lippert25}. In particular, this spectral sequence underlies the spectral sequences used in \cite{evans, boersema-gillaspy} to compute the $\Kk$-theory of higher rank graph $\cs$-algebras.
 
{In this Appendix, we discuss Kasparov's spectral sequence and some challenges it faces in concrete applications, and compare it to the ABC spectral sequence which has been the focus of this paper.}

 Briefly, for a real $\cs$-algebra $A$ with an automorphism $\alpha$, the Pimsner--Voiculescu exact sequence
 $$ \cdots \rightarrow \KO_*(A) \xrightarrow{1-\alpha_*} \KO_*(A) \xrightarrow{\iota} \KO_*(\Z \ltimes_\alpha A)
 \rightarrow \KO_*(A) \xrightarrow{1-\alpha_*} \KO_*(A) \rightarrow \cdots$$
 allows one to compute (at least up to extension) the real $\Kk$-theory $\KO_*(\Z \ltimes_\alpha A)$ of the crossed product $\Z \ltimes_\alpha A$ 
 in terms of the $\Kk$-theory of $A$ and the action of $\alpha$ thereon.
 
 For an action $\alpha$ of $\Z^k$ on $A$ (or equivalently, for $k$ commuting automorphisms $\alpha_i$ of $A$) the Kasparov spectral sequence operates similarly in the sense that the initial data is the $\Kk$-theory of $A$ and the action of $\alpha_i$ for each $i$, and the conclusion is information about the $\Kk$-theory of the crossed product 
 $\Z^k \ltimes_\alpha A$.
 More specifically, the $E^2$-page of the Kasparov spectral sequence is given by
 $$E^2_{p,q} = \Hh_{p}(\Z^k, \KO_q(A)) \; $$
 where the action of $\alpha_i$ on $\KO_*(A)$ informs the differentials of the chain complex used to compute this homology. The further differentials $d^r$ needed to determine the subsequent pages of the spectral sequence can be rather mysterious, but we know that the final page is realized as
 $E^{k+1}_{p,q} = E^\infty_{p,q}$
 (\cite[Theorem 2.1]{schochet-I}, \cite[Lemma 3.3]{evans}). 
 This spectral sequence converges to $\KO_*(\Z^k \ltimes_\alpha A)$ in the sense that there is a filtration
 $$ 0 = F_{i,0} \subseteq F_{i,1} \subseteq \cdots \subseteq F_{i,k} = \KO_i(\Z^k \ltimes_\alpha A)$$
 and there are isomorphisms $E^\infty_{p,q} \cong F_{p+q,p}/F_{p+1,p-1}$. Thus thespectral sequence allows one to compute $\KO_*(\Z^k \ltimes_\alpha A)$ up to extensions.
 In fact, this spectral sequence generalizes the Pimsner--Voiculescu  exact sequence since the  latter can be recovered from the spectral sequence in the case $k = 2$.
 
 There are two main difficulties with the spectral sequence,  for both real and complex $\cs$-algebras. First, although the $E^2$-page is readily identified, it
 does not determine the subsequent pages without knowing the action of the differentials $d^r$ and there is no known description of these in terms of $\KO_*(A)$ and $\alpha_*$. Consequently, for $r > 2$, there is no general algorithm for computing the  $E^r$ pages of the spectral sequence.  Moreover, even when the $E^\infty$ page can be found, it does not uniquely determine $\KO_*(\Z^k \ltimes_\alpha A)$ in general.
 
 Beyond the setting of $\Z^k$, for any countable discrete torsion-free group $G$ satisfying the Baum--Connes conjecture with coefficients in a separable real $\cs$-algebra $A$, 
 Kasparov's work \cite[Theorem 6.10]{kasparov-equivKK} gives a spectral sequence converging to $\KO_*(G \ltimes_{\alpha,r} A)$ that satisfies $$E^2_{p,q} = \Hh_{p}(G; \KO_q(A)). $$

 An alternative to Kasparov's approach is the ABC spectral sequence, developed in \cite{meyer-2008}. The ABC spectral sequence construction applies to a much more general setting, but for a countable discrete torsion-free group $G$ which satisfies the Baum--Connes conjecture with coefficients in $A$, it gives a spectral sequence which has the same properties as Kasparov's spectral sequence described above: it also satisfies 
 $$E^2_{p,q} = \Hh_{p}(G; \KO_q(A)) \; $$
 and it also converges to $\KO_*(G \ltimes_{\alpha,r} A)$.
 It seems likely to us that the spectral sequence arising from these two different sources are the same in the fullest sense, including that all the differentials are the same and that the associated filtrations of $\KO_*(G \ltimes_{\alpha,r} A)$ are the same, but we do not know this.
 As we have seen in earlier sections, the ABC framework has enabled us to obtain our desired naturality results about the functoriality of the spectral sequence with respect to group homomorphisms.

 {The naturality of the Kasparov spectral sequence (and also the Pimsner--Voiculescu exact sequence) with regards to morphisms of real $\cs$-algebras is well known (cf.~\cite[Remark A.2]{Barlak15}).}
  That is, if $(A, \alpha, G)$ and $(B, \beta, G)$ are $C \sp *$-dynamical systems, and $\phi \colon A \rightarrow B$ is a real $\cs$-algebra homomorphism such that $\phi \circ \alpha_g = \beta_g \circ \phi$ for all $g \in G$, then there is a homomorphism from the Kasparov spectral sequence for $(A, \alpha, G)$ to the spectral sequence for $(B, \beta, G)$ that converges to the homomorphism 
 $$\phi_* \colon \KO_*(G \ltimes_\alpha A) \rightarrow \KO_*(G \ltimes_\alpha B) .$$
 This means in particular that there is a map 
 $$\varepsilon^r_{p,q} \colon E^r_{p,q}(A) \rightarrow E^r_{p,q}(B)$$ 
 on each group of the spectral sequence that commutes with the differentials. Thus $\varepsilon^{1}$ determines $\varepsilon^{r}$ for all $r$. Furthermore, this means the map
 $\phi_*$ respects the filtrations on $\KO_*(G \ltimes_\alpha A)$ and $\KO_*(G \ltimes_\alpha B)$ and that the induced map on the subquotients of these filtrations agrees with the map $\varepsilon^\infty_{p,q}$ on $E^\infty_{p,q}$. This naturality follows fairly directly from the construction of the spectral sequence.
 
 {However, as we discussed in the introduction, naturality of the Kasparov spectral sequence with regards to group homomorphisms $\Omega\colon G\to H$ has not yet been established in the literature. 
 In contrast, the established functoriality of the ABC spectral sequences has enabled us to prove in this paper that the ABC spectral sequences are indeed natural in $G$.

\bibliography{eagbib} 
\bibliographystyle{amsalpha}

\end{document}